\theoremstyle{plain}
\newtheorem{thm}{Theorem}[section]
\newtheorem{lem}[thm]{Lemma}
\newtheorem{prop}[thm]{Proposition}
\newtheorem{cor}[thm]{Corollary}
\newtheorem{conj}[thm]{Conjecture}
\theoremstyle{definition}
\newtheorem{defn}[thm]{Definition}
\newtheorem{ex}[thm]{Example}
\theoremstyle{remark}
\newcommand{\newword}[1]{\emph{\color{black} {#1}}}
\newcommand{\Sym}{\mathfrak{S}}
\newcommand{\Peak}{\mathsf{Peak}}
\newcommand{\Pin}{\mathsf{Pin}}
\newcommand{\N}{\mathbb{N}}
\newcommand{\pfrk}{\mathfrak{p}}
\newcommand{\APS}{\mathsf{APS}}
\newcommand{\APSB}{\mathsf{APS}^B}
\newcommand{\APSD}{\mathsf{APS}^D}
\newcommand{\APSPLUS}{\mathsf{APS}^+}
\begin{document}
\title{Pinnacle sets of signed permutations}

\date{\today}
\author{Nicolle Gonz\'{a}lez}
\address[N.~Gonz\'{a}lez]{Department of Mathematics, University of California, Berkeley, CA,  94720}
\email{\textcolor{blue}{\href{mailto:nicolle@math.berkeley.edu}{nicolle@math.berkeley.edu}}}

\author{Pamela E. Harris}
\address[P.~E.~Harris]{Department of Mathematical Sciences, University of Wisconsin, Milwaukee, WI 53211}
\email{\textcolor{blue}{\href{mailto:peharris@uwm.edu}{peharris@uwm.edu}}}

\author{Gordon Rojas Kirby}
\address[G.~Rojas Kirby]{Department of Mathematics and Statistics, San Diego State University, CA 92182}
\email{\textcolor{blue}{\href{mailto:gkirby@sdsu.edu}{gkirby@sdsu.edu}}}
 
\author{Mariana Smit Vega Garcia}
\address[M.~Smit Vega Garcia]{Department of Mathematics, Western Washington University, Bellingham, WA 98225}
\email{\textcolor{blue}{\href{mailto:}{smitvem@wwu.edu}}}

\author{Bridget Eileen Tenner}
\address[B.~E.~Tenner]{Department of Mathematical Sciences, DePaul University, Chicago, IL 60614}
\email{\textcolor{blue}{\href{mailto:bridget@math.depaul.edu}{bridget@math.depaul.edu}}}

\maketitle
\begin{abstract}
    
    Pinnacle sets record the values of the local maxima for a given family of permutations. They were introduced by Davis-Nelson-Petersen-Tenner as a dual concept to that of peaks, previously defined by Billey-Burdzy-Sagan. In recent years pinnacles and admissible pinnacles sets for the type $A$ symmetric group have been widely studied. In this article we define the pinnacle set of signed permutations of types $B$ and $D$. We give a closed formula for the number of type $B$/$D$ admissible pinnacle sets and answer several other related enumerative questions. 
\end{abstract}

\section{Introduction}

The study of permutation statistics is an active subdiscipline of combinatorics. Given a permutation $w=w(1)w(2)\cdots w(n)$, two particularly well-studied statistics are \newword{descents} and \newword{peaks}. Respectively, these statistics refer to indices $i$ such that $w(i)>w(i+1)$, and indices $i$ such that $w(i-1)<w(i)>w(i+1)$. The collection of a permutation's descent indices is its descent set, with a permutation's peak set being similarly defined. Two fundamental goals in the study of these particular statistics are (1) understanding which subsets can arise as descent sets or peak sets (i.e., which sets are \newword{admissible} as descent or peak sets), and (2) enumerating the permutations that have a given admissible descent or peak set. 

For descents of permutations in the (type $A$) symmetric group $\Sym_n$, this question was answered by Stanley \cite[Ex. 2.2.4]{Stanley} and is well known to give rise to the Eulerian numbers. Inspired by Stembridge's study of peaks in the context of poset partitions \cite{Stembridge}, Billey, Burdzy, and Sagan \cite{BBS} introduced the study of \newword{admissible peak sets} for $\Sym_n$ with an interest in probabilistic applications, and established that the number of permutations with peak set $I$ is given by $2^{n-|I|-1}p(n)$, where $p(n)$ is a polynomial of degree $\max(I)-1$. 
Shortly thereafter, their results were extended to permutations in type $B$ by Castro-Velez et al.~\cite{CDOPZ} where it was shown that the number of permutations with a given peak set $I$ is $ 2^{2n-|I|-1}p(n)$, with $p(n)$ the same as in \cite{BBS} above. 
The second author and various collaborators went further by extending these results to types $C$ and $D$ \cite{DHIP}, using peaks to study properties of the descent polynomial \cite{DHIOS}, 
and then initiating the study of peaks in the context of graphs \cite{DEHIMO}. 

A notion that is closely related to peaks is the \newword{pinnacle set} of a permutation. \newword{Pinnacles} are the set of values held by the permutation at the peak indices. More precisely, given a permutation $w=w(1)w(2)\cdots w(n)$ with peak set $\Peak(w)$, the pinnacle set of $w$ is $\Pin(w)=\{w(i):\mbox{$i \in \Peak(w)$}\}$.
Given a subset $I\subseteq[n]$, if there exists a permutation $w$ whose pinnacle set is $I$, we say that $I$ is an \newword{admissible pinnacle set}. In \cite{PinnaclesTypeA}, Davis, Nelson, Petersen, and the last author pioneered the study of pinnacles for permutations in $\Sym_n$ and gave a complete characterization of admissible pinnacle sets. They provided a closed formula for the number of admissible pinnacle sets with a given maximum value, as well as a refinement to those appearing in $\Sym_n$. In particular, Davis et al.~gave a recursive formula for the number of permutations in $\Sym_n$ with a given pinnacle set $p(n)$ and asked whether a more efficient expression could be computed. This paper led to a sequence of articles in recent years, many focused on improved and faster formulas for $p(n)$, by realizing permutations with given pinnacle sets as invariants under certain modified $\Sym_n$-actions \cite{DHHIN,FNT} or via more traditional enumerative methods \cite{DLMSSS, fang, Minnich}. In related work, Rusu \cite{rusu} and Rusu-Tenner \cite{rusu-tenner} deepened the knowledge of pinnacles in $\Sym_n$ by investigating further properties of these statistics and characterizing \newword{admissible pinnacle orderings}. 

In this article we look beyond type $A$ and study pinnacles and admissible pinnacle sets 
for the type $B$ and type $D$ \newword{signed symmetric groups}, $\Sym_n^B$ and $\Sym_n^D$. Our main results are the following, where we write \newword{$\APS_n^X$} to denote the admissible pinnacle sets in $\Sym_n^X$ for $X \in \{A,B,D\}$:

\begin{enumerate}
    
    \item Theorem~\ref{T:countB} gives a closed formula for the number of admissible pinnacle sets in $\Sym_n^B$,
    $$|\APSB_n| = \sum_{k=0}^{\left\lfloor\frac{n-1}{2}\right\rfloor}\binom{n}{k} \binom{n-1-k}{\left\lfloor\frac{n-1}{2}\right\rfloor-k}.$$
   
   \item Theorem~\ref{T:2kBD} proves that any admissible pinnacle set in $\Sym_{2k}^B$ is also admissible in $\Sym_{2k}^D$; that is, $\APSD_{2k} = \APSB_{2k}$. 
    
    \item In counterpoint to Theorem~\ref{T:2kBD}, Theorem~\ref{T:BnotD} counts the admissible pinnacle sets of type $B$ that are not in type $D$ when $n=2k+1$, 
    $$\vert \APSB_{2k + 1} \setminus \APSD_{2k + 1} \vert = \binom{2k-1}{k}.$$
    
    \item Theorems~\ref{T:plusnotA1} and~\ref{T:plusnotA2} count the all-positive admissible pinnacle sets of type $B$ that are not admissible in type $A$. Namely, defining \newword{$\APSPLUS_n$} $\coloneqq \{ S \in \APSB_n : S \subset \mathbb{N}\};$ we prove that the sets $\APSPLUS_n\setminus \APS_n$ are enumerated by, 
    $$\left|\APSPLUS_n\setminus \APS_n\right|=
\begin{cases}
4^k - \binom{2k}{k}&\mbox{if $n = 2k + 1$, and}\\
2^{2k-1} - \binom{2k}{k}& \mbox{if $n=2k$.}
\end{cases}
$$ 
\end{enumerate}

This article is organized as follows. 
In Section~\ref{sec:background}, we introduce all the necessary background and notation, defining pinnacles and related notions in type $B$.
In Section~\ref{sec:admissible signed pinnacles}, we give a characterization of admissible signed pinnacle sets and a formula for their enumeration. In Section~\ref{sec: relating A, B, and D}, we provide relations between admissible pinnacle sets of type $A$, $B$, and $D$. Lastly, in Section~\ref{sec:future}, we describe some future directions and open conjectures.

\subsection*{Acknowledgements}
The authors thank Patrek K\'arason Ragnarsson for the coding and data that facilitated the research in this project, and Freyja K\'arad\'ottir Ragnarsson for the key insight to the proof of Theorem~\ref{T:BnotD}. 
The authors also thank the American Institute of Mathematics and the National Science Foundation for sponsoring the Latinx Mathematicians Research Community, which brought together a subset of the authors initially for collaboration.

P.E.H. was partially supported through a Karen Uhlenbeck EDGE Fellowship, M.S.V.G was partially supported by the NSF grant DMS 2054282, and B.E.T. was partially supported by the NSF grant DMS-2054436.

\section{Background}\label{sec:background}

Let $\N=\{1,2,3,\ldots\}$ and for $n\in\N$ we write $[n] \coloneqq\{1,2,\ldots,n\}$. For any set $X$, typically of positive values, although we make the definition more generally, we define
$$-X \coloneqq \{-x : x \in X\}.$$
Finally, we define
$$\pm X = X \cup -X.$$
Throughout this paper, we let $\Sym_n$ denote the (type $A$) symmetric group. That is, $\Sym_n$ is the group of bijections from $[n] \to [n]$ under function composition. We often write $w \in \Sym_n$ using one-line notation, as $w=w(1) w(2)\cdots w(n)$.

The type $B$ symmetric group (that is, the hyperoctahedral group) is the group of \newword{signed permutations} $\Sym^B_n$. These are bijections $\pm[n] \to \pm[n]$ such that 
\[w(-i) = -w(i) \text{ for all $i \in [n]$}.\]
In particular, any $w \in \Sym_n^B$ satisfies the property that $\{ |w(1)|,\ldots,|w(n)| \}=[n]$.

The type $D$ symmetric group is the subgroup $\Sym_n^D$ of $\Sym_n^B$ consisting of \newword{signed permutations with an even number of signs}. Namely, these are the signed permutations $w$ for which 
\[|\{i \in [n] \ : \ w(i)<0\}| \text{ is even. } \]

As in type $A$, we use one-line notation to denote signed permutations $w \in \Sym_n^B$, where we may write only $w=w(1) w(2)\cdots w(n)$ since this uniquely determines $w(-i)$ for all positive $i$. Following convention, we write $-i = \bar{i}$ to ease notation. For example, $w = \bar{1}2\bar{3}$ is the signed permutation with $w(1) = -1$, $w(2) = 2$, and $w(3) = -3$. 

Recall that a permutation $w\in \Sym_n$ has a \newword{peak} at index $i\in\{2,\ldots,n-1\}$ if 
\[w(i-1)<w(i)>w(i+1),\]
and the value $w(i)$ is a \newword{pinnacle} of $w$. We denote by $\Peak(w)$ the set of all peaks of $w\in\Sym_n$. 
The \newword{pinnacle set} of $w\in\Sym_n$ is 
\[
\Pin(w) = \{w(i) \ : \ i\in \Peak(w)\}.
\]

\begin{defn}
A set $P\subseteq[n]$ is an \newword{$n$-admissible pinnacle set} in type $A$ if there exists a permutation $w\in\Sym_n$ such that $\Pin(w)=P$, and we call the permutation $w$ a \newword{witness} for the set $P$. 
\end{defn}

For example, the identity permutation is a witness for the admissible pinnacle set $\emptyset$ (as is any peak-less permutation). Denote by $\APS_n$ the set of all $n$-admissible pinnacle sets in type $A$.  
Analogously, in later sections, we will write $\APSB_n$ for the admissible pinnacle sets in type $B$ and $\APSD_n$ for the admissible pinnacle sets in type $D$.

In order to facilitate our discussions about pinnacles, we introduce terminology about their minimal counterparts: a permutation $w\in\Sym_n$ has a \newword{valley} at index $i\in \{2,\ldots,n-1\}$ if 
\[w(i-1)>w(i)<w(i+1),\]
and the value $w(i)$ is a \newword{vale} of $w$.

\begin{figure}[ht]
\center
\begin{tikzpicture}[yscale=.5, xscale=.75]
\draw[thick] (0.5,8.5)--(0.5,0.5)--(8.5,0.5);
\node at (1,0)[scale=.75]{$1$};
\node at (2,0)[scale=.75]{$2$};
\node at (3,0)[scale=.75]{$3$};
\node at (4,0)[scale=.75]{$4$};
\node at (5,0)[scale=.75]{$5$};
\node at (6,0)[scale=.75]{$6$};
\node at (7,0)[scale=.75]{$7$};
\node at (8,0)[scale=.75]{$8$};
\node at (0,1)[scale=.75]{$1$};
\node at (0,2)[scale=.75]{$2$};
\node at (0,3)[scale=.75]{$3$};
\node at (0,4)[scale=.75]{$4$};
\node at (0,5)[scale=.75]{$5$};
\node at (0,6)[scale=.75]{$6$};
\node at (0,7)[scale=.75]{$7$};
\node at (0,8)[scale=.75]{$8$};
\draw[thick] (1,2)--(2,3)--(3,7)--(4,1)--(5,5)--(6,6)--(7,4)--(8,8);
\node at (1,2){$\bullet$};
\node at (2,3){$\bullet$};
\node at (3,7){$\bullet$};
\node at (4,1){$\bullet$};
\node at (5,5){$\bullet$};
\node at (6,6){$\bullet$};
\node at (7,4){$\bullet$};
\node at (8,8){$\bullet$};
\draw[red, thick] (3,7) circle (10pt);
\draw[red, thick] (6,6) circle (10pt);
\draw[blue] (4,1) circle (8pt);
\draw[blue] (7,4) circle (8pt);
\end{tikzpicture} 
\caption{The graph of the permutation $23715648 \in \Sym_8$ with the pinnacles/peaks circled in red and the vales/valleys in blue.\label{fig:typeApinnacles}}
\end{figure}
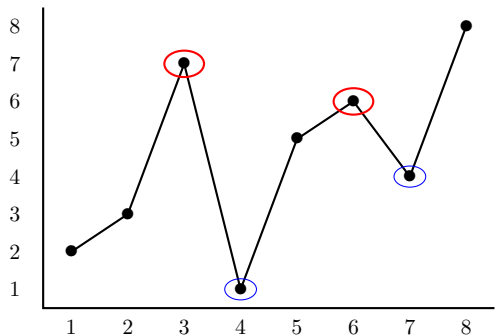

\begin{ex}
Consider the permutation $w= 23715648 \in \Sym_8$ shown in Figure~\ref{fig:typeApinnacles}. We have $\Peak(w) = \{ 3,6 \}$ and $\Pin(w) = \{ 6, 7\}$, and valleys and vales 
$\{4,7\}$ and $\{ 1,4\}$, respectively.
\end{ex}

\subsection{Pinnacles in types \texorpdfstring{$B$}{B} and \texorpdfstring{$D$}{D}}

Pinnacles were defined in \cite{PinnaclesTypeA} for unsigned permutations, but they could just as easily have been defined for signed permutations---or, in fact, for arbitrary strings of distinct real numbers. We now expand the type $A$ definitions to type $B$, and note that since $\Sym_n^D\subset \Sym_n^B$, these definitions also hold for type $D$.

\begin{defn}
Let $w$ be a signed permutation. A \newword{pinnacle} of $w$ is a value $w(i)$ that is larger than both $w(i-1)$ and $w(i+1)$. The \newword{pinnacle set} of $w$ is the set of its pinnacles. We will write $\Pin^B(w)$ to denote the pinnacle set of a signed permutation $w$.
\end{defn}

In order to define admissible pinnacle sets, it is important to establish which subsets
could even appear among the one-line notation of a signed permutation.

\begin{defn}
A \newword{signed set} (or \newword{signed subset}, depending on context)
is a set $I$ such that $x \in I$ implies $-x \not\in I$.
\end{defn}

Throughout this paper, we assume that all subsets of $\pm [n]$ are signed subsets.

\begin{defn}
A signed subset $S \subset \pm [n]$ is an \newword{admissible pinnacle set} if $S$ is the pinnacle set of some signed permutation. That permutation is a \newword{witness} for $S$.
\end{defn}

Note that when we study sets that are admissible as pinnacle sets in type $D$, any witness permutation will be required to be in $\Sym_n^D$ for some $n$. 
As before, we denote by \newword{$\APSB_n$} (resp., \newword{$\APSD_n$}) the set of all $n$-admissible pinnacle sets in type $B$ (resp., type $D$).
Once again, we have $\emptyset \in \APSD_n \subseteq \APSB_n$. For example, $123\cdots {n}$ and $\bar{2}\bar{1}34\cdots{n}$ are both witnesses for $\emptyset$.

While there can be multiple witness permutations for a given admissible pinnacle set, we will often refer to a particular witness permutation that we call ``canonical.''

\begin{defn}\label{defn:canonical witness}
For $S \in \APSB_n$, write $S = \{s_1 < s_2 < \cdots < s_k\}$, and set
$$S' \coloneqq -[n] \setminus \{-|s| : s \in S\} = \{s_1' < s_2' < \cdots < s_{n-k}'\}.$$
Then the \newword{canonical witness permutation} is 
$$w \coloneqq s_1' \ s_1 \ s_2' \ s_2 \ \cdots \ s_k' \ s_k \ s_{k+1}' \ \cdots \ s_{n-k}' \in \Sym_n^B.$$
If $S \in \APSD_n$, then its canonical (type $D$) witness permutation is $w$ as defined above if $w$ is in $\Sym_n^D$, and otherwise its canonical witness is obtained from $w$ by replacing $s_{n-k}'$ with $|s_{n-k}'|$.

The canonical witness permutation of a set $S \in \APS_n$ is defined similarly, but with $S' := [n] \setminus S$. This produces an unsigned permutation $w \in \Sym_n$ having pinnacle set $S$.
\end{defn}

Next we establish that the ``canonical witness permutations'' are, in fact, witnesses and follow this by providing canonical witness permutations in Example~\ref{ex:b but not d}. 

\begin{lem}
The canonical witness permutation for an admissible set $S$ is a witness for $S$.
\end{lem}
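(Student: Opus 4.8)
The plan is to verify directly from Definition~\ref{defn:canonical witness} that the proposed permutation $w$ has pinnacle set exactly $S$, handling the type $B$ and type $D$ cases in turn. First I would observe that the construction interleaves the elements of $S$ (in increasing order) with the elements of $S'$ (in increasing order), with the leftover primed entries $s_{k+1}', \ldots, s_{n-k}'$ appended at the end; since $S'$ consists of $n-k$ negative values and $|S| = k$, the counts match up so that $w$ is a well-defined sequence using each of $\pm[n]$-representatives $|w(i)|$ exactly once, hence $w \in \Sym_n^B$. The key numerical facts I would record are that every $s_j'$ is negative (so $s_j' < 0 < s_i$ for all relevant $i,j$) and that the $s_i$ are positive.

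Next I would check the peak condition position by position. An entry $w(i)$ is a pinnacle iff it exceeds both neighbors. In the interleaved prefix $s_1'\, s_1\, s_2'\, s_2\, \cdots\, s_k'\, s_k$, each $s_j'$ is negative and sits between two entries: its left neighbor is $s_{j-1}$ (positive, or nothing if $j=1$) and its right neighbor is $s_j$ (positive), so $s_j'$ is never a pinnacle. Each $s_j$ is positive with left neighbor $s_j'<0$; its right neighbor is $s_{j+1}'<0$ when $j<k$, or $s_{k+1}'<0$ when $j=k$ (using that $n-k\ge k$, which holds since $S$ is admissible and admissible pinnacle sets have at most $\lfloor (n-1)/2\rfloor$ elements — or more simply since $S'$ has $n-k$ elements and we need $k\le n-k$, equivalently the interleaving is possible, which is built into the definition). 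Hence every $s_j$ for $j=1,\ldots,k$ is a pinnacle. Finally, the appended tail $s_{k+1}' < s_{k+2}' < \cdots < s_{n-k}'$ is strictly increasing, so none of those positions is a pinnacle, and the last position $w(n)$ is never a pinnacle for lack of a right neighbor. Therefore $\Pin(w) = \{s_1,\ldots,s_k\} = S$.

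For the type $D$ case, if the interleaved $w$ already lies in $\Sym_n^D$ there is nothing more to do. Otherwise $w$ has an odd number of negative entries and we replace $s_{n-k}'$, the last entry, with $|s_{n-k}'|$; this changes the sign count by one, so the result is in $\Sym_n^D$. I would then note that this modification only affects the value at position $n$ (and, if $n-k = k+1$, makes that single appended entry positive rather than negative — but in that degenerate case the tail consists of exactly one entry sitting at position $n$, so it still has no right neighbor) and possibly the comparison at position $n-1$. Since position $n$ has no right neighbor it is still not a pinnacle; and replacing $s_{n-k}'$ by a positive value only makes $w(n-1)$ less likely to be a pinnacle, but in fact $w(n-1)$ is one of the already-appended negative entries $s_{n-k-1}'$ (when the tail has length $\ge 2$) which is smaller than $s_{n-k}'$ to begin with, so it was not a pinnacle before and is not after. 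Hence $\Pin(w)$ is unchanged and equals $S$.

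The main obstacle is simply bookkeeping at the boundary between the interleaved block and the appended tail — specifically confirming that $s_k$'s right neighbor $s_{k+1}'$ exists and is negative, which requires $k \le n-k$, and checking that the type $D$ sign-fix does not accidentally create or destroy a pinnacle near position $n$. Both reduce to the elementary observations that all $s_j'$ are negative, all $s_i$ are positive, and the last position has no successor; once those are stated cleanly the argument is routine.
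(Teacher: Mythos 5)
Your proof has a genuine gap, and it stems from a misreading of the construction. You assert as a ``key numerical fact'' that $s_j' < 0 < s_i$ for all relevant $i,j$, i.e.\ that every element of $S$ is positive. But $S$ is an arbitrary signed subset of $\pm[n]$ and may contain negative values: in Example~\ref{ex:b but not d} the set $S = \{\bar{4},1,2\}$ has canonical witness $\bar{7}\bar{4}\bar{6}1\bar{5}2\bar{3}$, so $s_1 = -4$. Once some $s_j$ is negative, the inequalities $s_j > s_j'$ and $s_j > s_{j+1}'$ that you need for $s_j$ to be a pinnacle are \emph{not} automatic from the interleaving. Indeed they can fail outright when $S$ is not admissible: for $S = \{-3\} \subset \pm[3]$ the construction yields $\bar{2}\,\bar{3}\,\bar{1}$, whose pinnacle set is empty. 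This shows the lemma genuinely requires the hypothesis that $S$ is admissible, and your argument never uses that hypothesis --- a telltale sign something is missing. The paper's proof is an extremal argument: since \emph{some} witness for $S$ exists, and the canonical construction uses the smallest possible non-pinnacle values placed next to the smallest pinnacles, the canonical permutation must also witness $S$. To repair your direct verification you would need to extract from an arbitrary witness the fact that $s_j$ exceeds at least $j+1$ distinct non-pinnacle values (each pinnacle among $s_1,\dots,s_j$ has two smaller non-pinnacle neighbors, and these positions overlap in at most $j-1$ places), and then use that $S'$ is element-wise minimal among all possible non-pinnacle value sets to conclude $s_j > s_{j+1}'$.

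A secondary issue is the type $D$ case when $n = 2k+1$ and the tail $s_{k+1}',\dots,s_{n-k}'$ has length one: then $w(n-1) = s_k$ \emph{is} a pinnacle, and flipping $w(n) = s_{k+1}'$ to the positive value $|s_{k+1}'|$ could destroy it unless $s_k > |s_{k+1}'|$. You dismiss this as degenerate, but establishing that inequality again requires using the hypothesis $S \in \APSD_n$; it is false for sets such as $\{1,2\} \in \APSB_5 \setminus \APSD_5$, where the flip turns $\bar{5}1\bar{4}2\bar{3}$ into $\bar{5}1\bar{4}23$ and loses the pinnacle $2$.
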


\begin{proof}
The set $S$ is admissible, so there is some permutation whose pinnacle set is $S$. The canonical witness has been constructed to have minimal possible non-pinnacle values, and to position the smallest non-pinnacle values beside the smallest pinnacle values. Therefore, if any permutations were to have $S$ as a pinnacle set (and we know that some permutation does), the permutation given in Definition~\ref{defn:canonical witness} would be among them.
\end{proof}

Although $\Sym_n^B$ contains both $\Sym_n$ and $\Sym_n^D$ as subgroups, there are interesting subtleties to the pinnacle sets that become admissible when witness permutations can be signed. First, some sets will be admissible with type $B$ permutations, but not with type $D$ permutations. And second, some sets of all-positive values will be admissible with type $B$ permutations, but not with type $A$ (unsigned) permutations. We demonstrate each of these scenarios below.

\begin{figure}[htbp]
\center
\begin{tikzpicture}[yscale=.5, xscale=.75]
\draw (-1,8) node {(a)};
\draw[black!30, dotted, ultra thick] (.5,0) -- (7.5,0);
\draw[thick] (0.5,7.5)--(0.5,-7.5)--(7.5,-7.5);
\node at (1,-8)[scale=.75]{$1$};
\node at (2,-8)[scale=.75]{$2$};
\node at (3,-8)[scale=.75]{$3$};
\node at (4,-8)[scale=.75]{$4$};
\node at (5,-8)[scale=.75]{$5$};
\node at (6,-8)[scale=.75]{$6$};
\node at (7,-8)[scale=.75]{$7$};
\node at (0,1)[scale=.75]{$1$};
\node at (0,2)[scale=.75]{$2$};
\node at (0,3)[scale=.75]{$3$};
\node at (0,4)[scale=.75]{$4$};
\node at (0,5)[scale=.75]{$5$};
\node at (0,6)[scale=.75]{$6$};
\node at (0,7)[scale=.75]{$7$};
\node at (0,0)[scale=.75]{$0$};
\node at (0,-1)[scale=.75]{$-1$};
\node at (0,-2)[scale=.75]{$-2$};
\node at (0,-3)[scale=.75]{$-3$};
\node at (0,-4)[scale=.75]{$-4$};
\node at (0,-5)[scale=.75]{$-5$};
\node at (0,-6)[scale=.75]{$-6$};
\node at (0,-7)[scale=.75]{$-7$};
\draw[thick] (1,-7)--(2,-4)--(3,-6)--(4,1)--(5,-5)--(6,2)--(7,-3);
\node at (1,-7){$\bullet$};
\node at (2,-4){$\bullet$};
\node at (3,-6){$\bullet$};
\node at (4,1){$\bullet$};
\node at (5,-5){$\bullet$};
\node at (6,2){$\bullet$};
\node at (7,-3){$\bullet$};
%
\draw[red, thick] (2,-4) circle (10pt);
\draw[red, thick] (4,1) circle (10pt);
\draw[red, thick] (6,2) circle (8pt);
\draw[blue] (3,-6) circle (8pt);
\draw[blue] (5,-5) circle (8pt);
\end{tikzpicture} 
\hspace{.5in}
\begin{tikzpicture}[yscale=.5, xscale=.75]
\draw (-1,8) node {(b)};
\draw[white] (8,8) node {(b)}; 
\draw[black!30, dotted, ultra thick] (.5,0) -- (7.5,0);
\draw[thick] (0.5,7.5)--(0.5,-7.5)--(7.5,-7.5);
\node at (1,-8)[scale=.75]{$1$};
\node at (2,-8)[scale=.75]{$2$};
\node at (3,-8)[scale=.75]{$3$};
\node at (4,-8)[scale=.75]{$4$};
\node at (5,-8)[scale=.75]{$5$};
\node at (6,-8)[scale=.75]{$6$};
\node at (7,-8)[scale=.75]{$7$};
\node at (0,1)[scale=.75]{$1$};
\node at (0,2)[scale=.75]{$2$};
\node at (0,3)[scale=.75]{$3$};
\node at (0,4)[scale=.75]{$4$};
\node at (0,5)[scale=.75]{$5$};
\node at (0,6)[scale=.75]{$6$};
\node at (0,7)[scale=.75]{$7$};
\node at (0,0)[scale=.75]{$0$};
\node at (0,-1)[scale=.75]{$-1$};
\node at (0,-2)[scale=.75]{$-2$};
\node at (0,-3)[scale=.75]{$-3$};
\node at (0,-4)[scale=.75]{$-4$};
\node at (0,-5)[scale=.75]{$-5$};
\node at (0,-6)[scale=.75]{$-6$};
\node at (0,-7)[scale=.75]{$-7$};
\draw[thick] (1,-6)--(2,3)--(3,-5)--(4,4)--(5,-1)--(6,7)--(7,-2);
\node at (1,-6){$\bullet$};
\node at (2,3){$\bullet$};
\node at (3,-5){$\bullet$};
\node at (4,4){$\bullet$};
\node at (5,-1){$\bullet$};
\node at (6,7){$\bullet$};
\node at (7,-2){$\bullet$};
%
\draw[red, thick] (2,3) circle (10pt);
\draw[red, thick] (4,4) circle (10pt);
\draw[red, thick] (6,7) circle (8pt);
\draw[blue] (3,-5) circle (8pt);
\draw[blue] (5,-1) circle (8pt);
\end{tikzpicture} 
\caption{(a) The graph of the permutation $\bar{7}\bar{4} \bar{6} 1 \bar{5}2 \bar{3} \in \Sym_7^B$ with the pinnacles/peaks circled in red and the vales/valleys in blue. (b) The graph of the permutation $\bar{6} 3 \bar{5} 4 \bar{1} 7 \bar{2} \in \Sym_7^B$ with the pinnacles/peaks circled in red and the vales/valleys in blue.}\label{fig:typeBpinnacles}\label{fig:all positive b but not in a}
\end{figure}
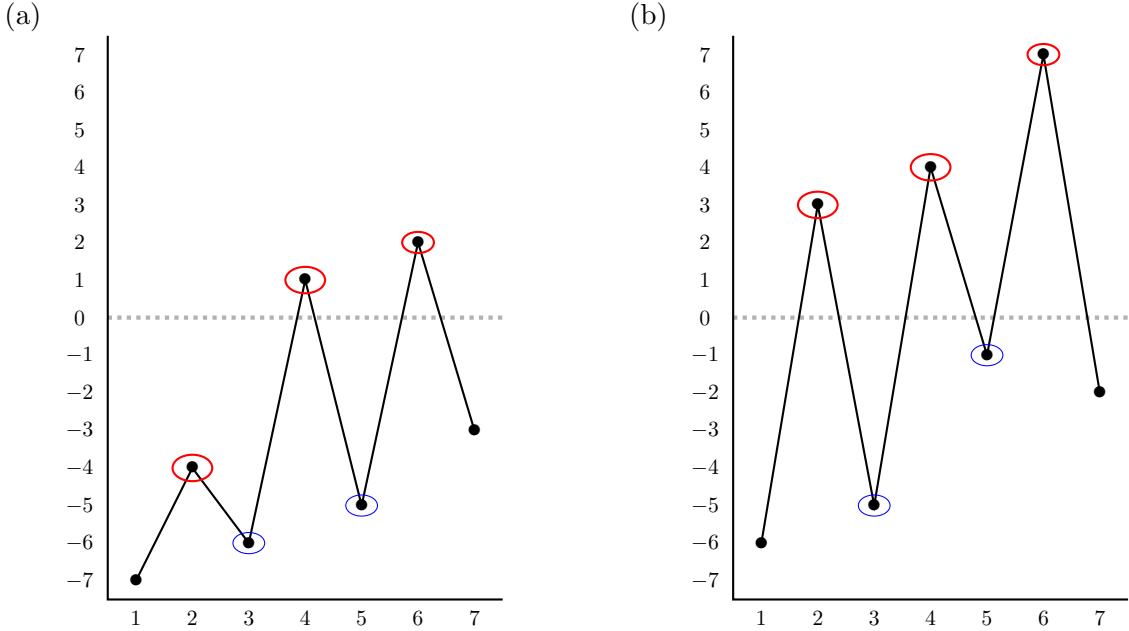

\begin{ex}\label{ex:b but not d}\label{ex:all positive b but not a}
The set $\{\bar{4},1,2\}$ is admissible in $\Sym_7^B$, with canonical witness 
permutation $\bar{7}\bar{4}\bar{6}1\bar{5}2\bar{3}$ as shown in Figure~\ref{fig:typeBpinnacles}(a). However, there is no element of $\Sym_7^D$ having this pinnacle set. 
That is, $\{\bar{4},1,2\} \not\in \APSD_7$. The set $\{3,4,7\}$ is admissible in $\Sym_7^B$, with canonical witness permutation $\bar{6} 3 \bar{5} 4 \bar{1} 7 \bar{2}$, as shown in Figure~\ref{fig:all positive b but not in a}(b). However, despite its pinnacle set having all positive values, there is no type $A$ permutation having this pinnacle set. That is, $\{3,4,7\} \not\in \APS_n$ for any $n$.
\end{ex}

\section{Admissible signed pinnacle sets in type \texorpdfstring{$B$}{B}}\label{sec:admissible signed pinnacles}

In this section, we  characterize and enumerate the admissible pinnacle sets among signed permutations. This  expands on the work begun in \cite{PinnaclesTypeA} for unsigned permutations, but, as we show, the results for signed permutations are subtly different from those in type $A$.

\subsection{Characterization of admissible pinnacle sets}

For the remainder of the article, we will often use the fact that given an admissible pinnacle set $S\in\APSB_n$, we can always write
$$S = P(S) \sqcup N(S)$$
with 
\[P(S) \coloneqq S \cap [n] \text{ and } N(S) \coloneqq S \cap -[n].\]
When no confusion will arise, we simply write $P \coloneqq P(S)$ and $N\coloneqq N(S)$.

To give a first inkling of how admissible pinnacle sets in type $B$ are fundamentally different from those in type $A$, we note that there are some sets of positive integers that are never in $\APS_n$ for any $n$. For example, any set containing $1$ or $2$ will never be the pinnacle set of any permutation in $\Sym_n$. On the other hand, such a statement is not true in type $B$.

\begin{lem}\label{lem:AnyIntegersAreAPS}
Every finite signed subset $S$ is admissible in $\Sym_n^B$, for some $n \in \N$. That is, there exists $w\in \Sym^B_n$ such that $S = \Pin^B(w)$. 
\end{lem}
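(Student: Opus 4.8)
The plan is to prove admissibility by an explicit construction: essentially the canonical witness of Definition~\ref{defn:canonical witness}, but carried out in a ground set $\pm[n]$ large enough that every filler value can be made more negative than everything in $S$. If $S=\emptyset$ there is nothing to prove (e.g.\ the identity is a witness), so assume $S=\{s_1<s_2<\cdots<s_k\}$ with $k\ge 1$, set $m\coloneqq\max\{|s|:s\in S\}$, and fix any integer $n\ge m+k+1$. Let $S'\coloneqq -[n]\setminus\{-|s|:s\in S\}=\{s_1'<\cdots<s_{n-k}'\}$ and put
$$w\coloneqq s_1'\,s_1\,s_2'\,s_2\,\cdots\,s_k'\,s_k\,s_{k+1}'\,\cdots\,s_{n-k}'.$$
First I would check $w\in\Sym_n^B$: it has $k+(n-k)=n$ entries, $S$ is a signed set, $S'$ consists only of negative values, and $\{|x|:x\in S'\}=[n]\setminus\{|s|:s\in S\}$, so the absolute values of the entries of $w$ are exactly $[n]$ and no value occurs together with its negative.

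The crux is one numerical observation forced by the choice of $n$: since $n\ge m+k+1$, the $n-m$ most negative elements of $-[n]$, namely $-n,-(n-1),\dots,-(m+1)$, are never of the form $-|s|$ with $s\in S$, hence all lie in $S'$; therefore $s_t'=-(n-t+1)$ for $1\le t\le k+1$, and in particular $s_1',\dots,s_{k+1}'$ are all $\le -(m+1)<-m$, so each is strictly smaller than every element of $S$. With this in hand I would read off $\Pin^B(w)$ position by position. For $1\le i\le k$ the entry $s_i$ sits at position $2i$ with left and right neighbors $s_i'$ and $s_{i+1}'$, both of which are $<-m\le s_i$, so $s_i$ is a pinnacle; thus $S\subseteq\Pin^B(w)$. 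For the reverse inclusion: the entry $s_1'$ is at position $1$ and has no left neighbor; for $2\le j\le k$ the entry $s_j'$ at position $2j-1$ has right neighbor $s_j\in S$ with $s_j'<-m\le s_j$, so $s_j'$ is not a pinnacle; and the terminal block $s_{k+1}'<s_{k+2}'<\cdots<s_{n-k}'$ is strictly increasing, so no entry there exceeds its right neighbor and hence none is a pinnacle. Since every position of $w$ lies in one of these cases, $\Pin^B(w)=S$.

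The one genuinely delicate point — and what dictates $n\ge m+k+1$ rather than the minimal $n=2k+1$ of the type $A$-style construction — is ruling out \emph{spurious} pinnacles among the filler entries. The bound $n\ge m+k+1$ ensures the first $k+1$ filler values are below all of $S$, which kills the interleaved fillers as pinnacle candidates, and appending the remaining fillers in increasing order disposes of the tail for free. Beyond this, the only work is the bookkeeping of which entry occupies which position of $w$ and checking the neighbor inequalities; I do not anticipate any obstacle there.
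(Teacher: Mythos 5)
Your proof is correct and follows essentially the same approach as the paper: build the interleaved canonical-style witness $s_1'\,s_1\,s_2'\,s_2\cdots$ over a ground set large enough that the filler values flanking the elements of $S$ all lie below $-m$. The paper simply fixes $n=2m+1$ (which satisfies your bound $n\ge m+k+1$ since $k\le m$) and omits the position-by-position verification that you spell out.
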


\begin{proof}
Write $S=\{s_1< \cdots < s_k\}$. Let $m = \max\{|s| : s \in S\}$ (that is, $m = \max\{|s_1|,|s_k|\}$). Define the set $S' \coloneqq -[2m+1] \setminus \{-|s| : s \in S\}$, which we write as $S' = \{s_1' < \cdots < s_{2m+1-k}'\}$. Then
$$w = s_1' \ s_1 \ s_2' \ s_2 \ \cdots \ s_k' \ s_k \ s_{k+1}' \ s_{k+2}' \ \cdots \ s_{2m+1-k}' \in \Sym_{2m+1}^B,$$
and $\Pin^B(w) = S$.
\end{proof}

Using a similar argument as the one proving Lemma~\ref{lem:AnyIntegersAreAPS}, it follows that any finite set of all positive values is admissible in some $\Sym_n^B$. 

\begin{cor}\label{cor:any all positive set is admissible in type b}
Any subset $P \subset [n]$ with $|P| \le \frac{n-1}{2}$ is admissible in $\Sym_n^B$.
\end{cor}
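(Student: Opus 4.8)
The plan is to produce an explicit witness by running the construction of Lemma~\ref{lem:AnyIntegersAreAPS} and Definition~\ref{defn:canonical witness}, but now entirely inside $\Sym_n^B$ for the given $n$ rather than passing to a larger hyperoctahedral group. Write $P = \{p_1 < p_2 < \cdots < p_k\}$ with $k = |P|$, and set $S' \coloneqq -[n] \setminus \{-p : p \in P\} = \{s_1' < \cdots < s_{n-k}'\}$. First I would record that $|S'| = n-k \ge k+1$, which holds \emph{precisely} because $k \le \frac{n-1}{2}$, and then take
$$w \coloneqq s_1'\, p_1\, s_2'\, p_2\, \cdots\, s_k'\, p_k\, s_{k+1}'\, s_{k+2}'\, \cdots\, s_{n-k}'.$$
I would check $w \in \Sym_n^B$: its entries are the $k$ positive values of $P$ together with the $n-k$ negative values of $S'$, and since $\{|s_j'| : j\} = [n]\setminus P$, we get $\{|w(i)| : i \in [n]\} = [n]$, so the one-line notation is legitimate.

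Next I would verify $\Pin^B(w) = P$. Each $p_j$ sits at the even position $2j$ with $2 \le 2j \le 2k \le n-1$, so it has a left neighbor ($s_j'$) and a right neighbor ($s_{j+1}'$, which exists because $|S'|\ge k+1$), and both neighbors lie in $S' \subset -[n]$ while $p_j > 0$; hence every $p_j$ is a pinnacle. Conversely, no other entry is a pinnacle: any $s_j'$ with $j \le k$ is negative and is adjacent to the positive value $p_j$ at position $2j$ (or is at the endpoint position $1$), so it cannot exceed both neighbors; the entries $s_{k+1}', \ldots, s_{n-k}'$ form a strictly increasing tail, which has no interior local maximum; and positions $1$ and $n$ are endpoints and thus never pinnacles. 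Combining these observations gives $\Pin^B(w) = P$.

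The one place where the hypothesis is genuinely used — and the step I would flag as the crux — is ensuring that the largest pinnacle $p_k$, placed at position $2k$, has an entry to its right to serve as the smaller neighbor; this forces $2k+1 \le n$, i.e. $k \le \frac{n-1}{2}$, equivalently $|S'| \ge k+1$ so that the filler $s_{k+1}'$ exists. (Were $2k = n$ instead, $p_k$ would occupy the final slot and could not be a pinnacle, so the bound is the natural one here.) Everything else reduces to the fact that every element of $S'$ is negative while every element of $P$ is positive, so I would keep those sign checks terse, noting the degenerate case $k=0$ (where $w = \bar{n}\,\overline{n-1}\cdots\bar{1}$ is increasing and has empty pinnacle set) only in passing.
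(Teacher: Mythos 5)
Your construction is exactly the paper's: interleave the negated complement $-([n]\setminus P)$ with the elements of $P$ in increasing order, placing each $p_j$ between two negative fillers, with the leftover negatives appended in increasing order at the end. The paper states this witness without spelling out the verification; your sign checks and the observation that $k \le \frac{n-1}{2}$ is what guarantees the filler $s_{k+1}'$ to the right of $p_k$ are all correct, so this is the same proof, just written out in more detail.
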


\begin{proof}
Let $P=\{p_1 < \cdots < p_k\}$, and set $P'\coloneqq-([n]\setminus P)=\{p_1'<\cdots<p_{n-k}'\}$. Then 
$$w=p_1' \  p_1 \ p_2' \ p_2 \ \cdots \ p_k' \ p_k \ p_{k+1}' \ p_{k+2}' \ \cdots \ p_{n-k}'\in \Sym_n^B$$
and $\Pin^B(w)=P$.
\end{proof}

This can be particularly interesting when the set $P$ was not admissible in $\Sym_n$.

\begin{ex}
Consider $P = \{1,2\}$ with $n = 5$. The permutation $\bar{5}1\bar{4}2\bar{3} \in \Sym^B_5$ is a witness 
permutation for $P$, so $P \in \APSB_5$, while $P \not\in \APS_n$ for any $n$.
\end{ex}

Our goal is to establish a characterization and formula for the number of admissible pinnacle sets in $\Sym_n^B$. We begin with some preliminary steps, from which those results will follow. The first of these is a bijection between admissible pinnacle sets in $\Sym_n$ and those admissible pinnacle sets in $\Sym_n^B$ that have no positive values.

\begin{lem}\label{lem:positive aps in bijective with negative aps}
There exists a bijection between $\APS_n$ and $\{ S \in \APS^B_n : S \subset -\mathbb{N}\}$.
\end{lem}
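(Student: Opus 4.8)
The plan is to construct an explicit, reversible map between $\APS_n$ and the set $\mathcal{N}_n \coloneqq \{ S \in \APSB_n : S \subset -\mathbb{N}\}$ of all-negative type $B$ admissible pinnacle sets. The natural candidate is negation composed with a shift: given $P \in \APS_n$ with $P \subseteq [n]$, the entries of a type $A$ witness $w \in \Sym_n$ all lie in $[n]$, and $P$ necessarily satisfies $\max P \le n$ and $\min P \ge 3$. I would send $P$ to $\varphi(P) \coloneqq \{ x - (n+1) : x \in P \}$, which is a subset of $-[n] \subset -\mathbb{N}$. Conversely, given $S \in \mathcal{N}_n$ with $S \subseteq -[n]$ (note $S \subseteq -[n]$ rather than all of $-\mathbb{N}$ because any type $B$ witness in $\Sym_n^B$ has entries in $\pm[n]$), I would send $S$ to $\psi(S) \coloneqq \{ y + (n+1) : y \in S\} \subseteq [n]$. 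These are clearly mutually inverse bijections of the relevant power sets, so the content is entirely in checking they restrict to bijections between the two admissibility classes.

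The key step, then, is the following: $P \in \APS_n$ if and only if $\varphi(P) \in \APSB_n$. For the forward direction, take a type $A$ witness $w = w(1)\cdots w(n) \in \Sym_n$ for $P$, and consider the signed permutation $w^- \coloneqq \overline{w(1)}\,\overline{w(2)}\cdots\overline{w(n)}$, i.e. negate every entry. This lies in $\Sym_n^B$, and negation reverses all the inequalities defining peaks, so $\Peak(w^-) = \Pin^{-1}$-style; more precisely, the \emph{valleys} of $w^-$ sit at the peak indices of $w$ and the \emph{peaks} of $w^-$ sit at the valley indices of $w$. That is the wrong parity. Instead I would reverse the one-line notation as well: set $w^* \coloneqq \overline{w(n)}\,\overline{w(n-1)}\cdots\overline{w(1)}$. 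Reversing the word and negating each entry together preserve the peak/valley structure: an index $i$ is a peak of $w$ exactly when the mirrored index $n+1-i$ is a peak of $w^*$, and the value at that peak is $w(i)$, which becomes $-w(i) = \varphi$ applied to the value (up to the shift, which I can absorb by instead using entries from $-[n]$ directly, i.e. define $w^*$ to have entries $\{-1,\dots,-n\}$ via $x \mapsto x$ composed with negation — so the pinnacle value $w(i)$ maps to $-w(i)$, and I should then take $\varphi(P) = -P$, not the shifted version). Let me therefore use the cleaner map $\varphi(P) = -P = \{-x : x \in P\}$: then $w^*$ as defined is a signed permutation in $\Sym_n^B$ with pinnacle set $-P \subset -\mathbb{N}$, proving $\varphi(P) \in \mathcal{N}_n$. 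For the reverse direction, run the same mirror-and-negate operation on an all-negative type $B$ witness $v \in \Sym_n^B$ for $S$; since $S \subseteq -\mathbb{N}$ forces every entry of $v$ to be negative as well at the pinnacle positions, but the \emph{non}-pinnacle entries of $v$ could still be positive. This is the subtlety I expect to be the main obstacle.

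To handle it, I would argue that without loss of generality we may take the witness $v$ for $S \in \mathcal{N}_n$ to have \emph{all} entries negative — and here I can invoke the canonical witness from Definition~\ref{defn:canonical witness}: for $S \subseteq -[n]$, the set $S' = -[n] \setminus \{-|s| : s \in S\}$ consists entirely of negative values (since $S$ is all-negative, $\{-|s| : s\in S\} = -S$ is a set of positive values, so $S' \subseteq -[n]$ is all-negative), hence the canonical witness $w = s_1' s_1 s_2' s_2 \cdots$ has all entries in $-[n]$. Applying mirror-and-negate to this all-negative canonical witness produces a genuine element of $\Sym_n$ (all entries in $[n]$) whose pinnacle set is $-S \subset \mathbb{N}$, so $-S = \psi(S) \in \APS_n$. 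Combined with the fact that $\varphi$ and $\psi = \varphi^{-1}$ are inverse bijections on the level of signed subsets, this shows $P \mapsto -P$ restricts to a bijection $\APS_n \to \mathcal{N}_n$, completing the proof. The one routine check I am deferring is the claim that mirror-and-negate preserves the peak structure and sends the pinnacle value at index $i$ to its negation at index $n+1-i$; this is immediate from the definitions since $a < b > c$ holds iff $-c > -b < -a$ reversed, i.e. iff $-a < -b > -c$ after reindexing.
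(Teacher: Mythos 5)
There is a genuine error in your proposal, and it enters exactly at the moment you ``clean up'' the map. Your original candidate, the shift $\varphi(P) = \{x-(n+1) : x\in P\}$, is the right one (and is the map the paper uses): shifting every entry of a witness $w\in\Sym_n$ by $-(n+1)$ sends $[n]$ bijectively to $-[n]$, preserves every order relation between consecutive entries, and hence carries $\Pin(w)=P$ to $P-(n+1)$. But you then replace it by $\varphi(P)=-P$ together with the mirror-and-negate witness $w^* = \overline{w(n)}\,\cdots\,\overline{w(1)}$, justified by the claim that reversal and negation together preserve the peak structure. They do not. Negation alone turns peaks into valleys (you say this correctly), but reversal alone \emph{preserves} peaks and valleys --- reading $w(i-1)<w(i)>w(i+1)$ backwards still exhibits a local maximum --- so the composite still turns peaks into valleys. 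Your final ``routine check'' contains the slip: from $a<b>c$ you correctly get $-c>-b<-a$, which is a valley pattern, and then you assert this is ``$-a<-b>-c$ after reindexing,'' which reverses the inequalities a second time for no reason. Concretely, $\{3\}\in\APS_3$, but $-\{3\}=\{-3\}\notin\APSB_3$, since $-3$ is the minimum of $\pm[3]$ and can never be a pinnacle; more generally $\{n\}\in\APS_n$ while $\{-n\}$ is never admissible. So $P\mapsto -P$ does not even land in the target set.

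The fix is simply to revert to the shift map and drop the reversal/negation entirely; with that change your argument matches the paper's. Your observation about the inverse direction --- that an arbitrary witness for an all-negative $S$ may have positive non-pinnacle entries, and that one should pass to the canonical witness of Definition~\ref{defn:canonical witness}, whose entries all lie in $-[n]$ when $S\subset-\mathbb{N}$, before shifting by $+(n+1)$ --- is a correct and worthwhile point; the paper's proof handles this step with only the phrase ``as before,'' so your explicit justification there is, if anything, more careful than the original.
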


\begin{proof}
Given $T\in \APS_n$, define $T' \coloneqq \{t - (n+1) : t \in T\}$. The set $T'$ has no positive elements. Let $w \in \Sym_n$ be the canonical witness for $T$. Then $w' \coloneqq (w(1)-(n+1)) \cdots (w(n) - (n+1)) \in \Sym_n^B$ has pinnacle set $T'$, and so $T' \in \APSB_n$. 

This process can be inverted: given $S\in\APSB_n$ with $P(S) = \emptyset$, map this $S$ to $S' \coloneqq \{s + n+1 : s \in S\}$. It follows that $S'\in\APS_n$, as before.
\end{proof}

We illustrate Lemma~\ref{lem:positive aps in bijective with negative aps} with an example.

\begin{ex}
The set $\{3,6,7,10\} \in \APS_{10}$ is in correspondence with $\{\bar{8}, \bar{5}, \bar{4}, \bar{1}\} \in \APSB_{10}$. The permutations described in the proof of Lemma~\ref{lem:positive aps in bijective with negative aps}, which exhibit these sets as pinnacle sets, are shown in Figure~\ref{fig:bijection}.
\end{ex}

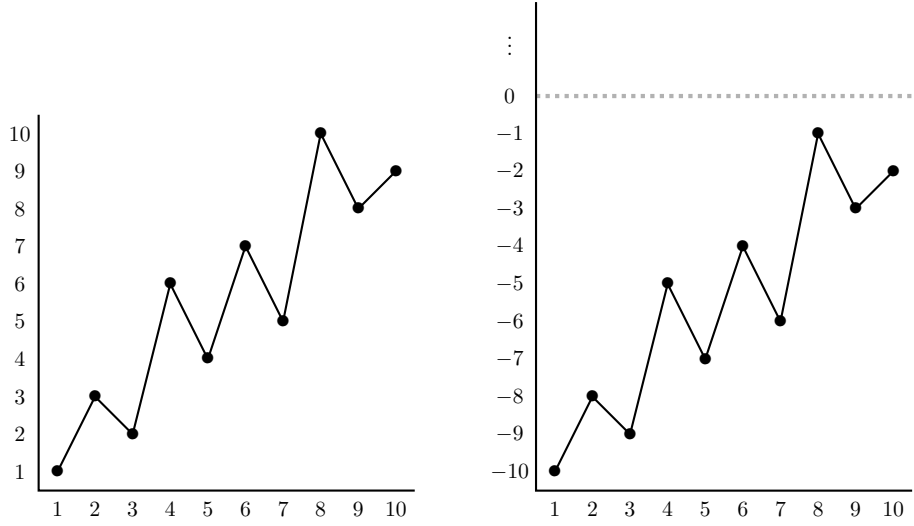
\begin{figure}[htbp]
\center
\begin{tikzpicture}[yscale=.5, xscale=.5]
\draw[thick] (0.5,10.5)--(0.5,.5)--(10.5,.5);
\node at (1,0)[scale=.75]{$1$};
\node at (2,0)[scale=.75]{$2$};
\node at (3,0)[scale=.75]{$3$};
\node at (4,0)[scale=.75]{$4$};
\node at (5,0)[scale=.75]{$5$};
\node at (6,0)[scale=.75]{$6$};
\node at (7,0)[scale=.75]{$7$};
\node at (8,0)[scale=.75]{$8$};
\node at (9,0)[scale=.75]{$9$};
\node at (10,0)[scale=.75]{$10$};
\node at (0,1)[scale=.75]{$1$};
\node at (0,2)[scale=.75]{$2$};
\node at (0,3)[scale=.75]{$3$};
\node at (0,4)[scale=.75]{$4$};
\node at (0,5)[scale=.75]{$5$};
\node at (0,6)[scale=.75]{$6$};
\node at (0,7)[scale=.75]{$7$};
\node at (0,8)[scale=.75]{$8$};
\node at (0,9)[scale=.75]{$9$};
\node at (0,10)[scale=.75]{$10$};
\node(a1) at (1,1){$\bullet$};
\node(a2) at (2,3){$\bullet$};
\node(a3) at (3,2){$\bullet$};
\node(a4) at (4,6){$\bullet$};
\node(a5) at (5,4){$\bullet$};
\node(a6) at (6,7){$\bullet$};
\node(a7) at (7,5){$\bullet$};
\node(a8) at (8,10){$\bullet$};
\node(a9) at (9,8){$\bullet$};
\node(a10) at (10,9){$\bullet$};
\draw[thick] (1,1)--(2,3)--(3,2)--(4,6)--(5,4)--(6,7)--(7,5)--(8,10)--(9,8)--(10,9);
\end{tikzpicture} 
\qquad
\begin{tikzpicture}[yscale=.5, xscale=.5]

\draw[black!30, dotted, ultra thick] (.5,0) -- (10.5,0);
\draw[thick] (0.5,2.5)--(0.5,-10.5)--(10.5,-10.5);
\node at (1,-11)[scale=.75]{$1$};
\node at (2,-11)[scale=.75]{$2$};
\node at (3,-11)[scale=.75]{$3$};
\node at (4,-11)[scale=.75]{$4$};
\node at (5,-11)[scale=.75]{$5$};
\node at (6,-11)[scale=.75]{$6$};
\node at (7,-11)[scale=.75]{$7$};
\node at (8,-11)[scale=.75]{$8$};
\node at (9,-11)[scale=.75]{$9$};
\node at (10,-11)[scale=.75]{$10$};
\node at (-.2,-1)[scale=.75]{$-1$};
\node at (-.2,-2)[scale=.75]{$-2$};
\node at (-.2,-3)[scale=.75]{$-3$};
\node at (-.2,-4)[scale=.75]{$-4$};
\node at (-.2,-5)[scale=.75]{$-5$};
\node at (-.2,-6)[scale=.75]{$-6$};
\node at (-.2,-7)[scale=.75]{$-7$};
\node at (-.2,-8)[scale=.75]{$-8$};
\node at (-.2,-9)[scale=.75]{$-9$};
\node at (-.2,-10)[scale=.75]{$-10$};
\node at (-.2,0)[scale=.75]{$0$};
\node at (-.2,1.5)[scale=.75]{$\vdots$};
\node at (1,-10){$\bullet$};
\node at (2,-8){$\bullet$};
\node(a3) at (3,-9){$\bullet$};
\node(a4) at (4,-5){$\bullet$};
\node(a5) at (5,-7){$\bullet$};
\node(a6) at (6,-4){$\bullet$};
\node(a7) at (7,-6){$\bullet$};
\node(a8) at (8,-1){$\bullet$};
\node(a9) at (9,-3){$\bullet$};
\node(a10) at (10,-2){$\bullet$};
\draw[thick] (1,-10)--(2,-8)--(3,-9)--(4,-5)--(5,-7)--(6,-4)--(7,-6)--(8,-1)--(9,-3)--(10,-2);

\end{tikzpicture} 
\caption{The left-hand figure shows the canonical witness for $\{3,6,7,10\}$ in $\Sym_{10}$. The right-hand figure shows the corresponding witness permutation for $\{\bar{8}, \bar{5}, \bar{4}, \bar{1}\}$, as defined in the proof of Lemma~\ref{lem:positive aps in bijective with negative aps}. } \label{fig:bijection}
\end{figure}

We have defined admissible pinnacle sets in types $A$, $B$, and $D$, referring to permutations in $\Sym_n$, $\Sym^B_n$, or $\Sym^D_n$. However, as suggested earlier, there is a natural generalization of admissible pinnacle sets to permutations of \emph{any} totally ordered set.

\begin{defn}\label{def:pinnacles X}
For any totally ordered set $X$, let $\APS(X)$ be the set of \newword{admissible pinnacle sets} of $X$. The definitions of \newword{witness} and \newword{canonical witness} permutations in this general setting are analogous to their definitions in the symmetric groups.
\end{defn}

\begin{ex}The set $X = \{-2, \pi, 4, 5, 100\}$ has six admissible pinnacle sets:
$$\emptyset,\ \{4\},\ \{5\},\ \{100\},\ \{4, 100\}, \text{ and } \{5,100\}.$$ 
\end{ex}

Note that if we are only interested in how many admissible pinnacle sets $X$ has, as opposed to the sets themselves, then the size of $X$ is what matters. 

\begin{lem}\label{lem:aps of generic set}
For any totally ordered finite set $X$, $|\APS(X)| = |\APS\left([|X|]\right)| = |\APS_{|X|}|$.
\end{lem}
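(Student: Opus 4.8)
The plan is to exhibit an order-preserving bijection $\phi\colon X \to [|X|]$ and argue that relabeling the entries of any permutation of $X$ via $\phi$ gives a permutation of $[|X|]$ with the ``same'' pinnacle structure, and vice versa. The key observation is that whether a value $w(i)$ is a pinnacle of a word $w = w(1)w(2)\cdots w(m)$ depends only on the relative order of $w(i-1), w(i), w(i+1)$, and that the pinnacle set itself is just a distinguished subset of the entries; an order isomorphism between two totally ordered sets therefore transports permutations to permutations and pinnacle sets to pinnacle sets.

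First I would fix the unique increasing bijection $\phi\colon X \to [|X|]$, so that $\phi(x) < \phi(y)$ if and only if $x < y$ for all $x, y \in X$. Given any permutation $u = u(1)u(2)\cdots u(m)$ of $X$ (where $m = |X|$), define $\phi(u) \coloneqq \phi(u(1))\,\phi(u(2))\cdots\phi(u(m))$, which is a permutation of $[m]$ since $\phi$ is a bijection. Because $\phi$ is order-preserving, for each $i$ we have $u(i-1) < u(i) > u(i+1)$ if and only if $\phi(u(i-1)) < \phi(u(i)) > \phi(u(i+1))$; hence $u(i)$ is a pinnacle of $u$ exactly when $\phi(u(i))$ is a pinnacle of $\phi(u)$. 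Taking the set of all such entries, $\Pin(\phi(u)) = \phi(\Pin(u))$. Next I would let $\psi \colon \APS(X) \to \APS_m$ be the induced map $S \mapsto \phi(S)$ on admissible pinnacle sets: if $S = \Pin(u)$ for some permutation $u$ of $X$, then $\phi(S) = \Pin(\phi(u))$ is the pinnacle set of a permutation of $[m]$, so $\phi(S) \in \APS_m$; the map is well defined. The same argument applied to $\phi^{-1}$ (which is also order-preserving) shows $S \mapsto \phi^{-1}(S)$ maps $\APS_m$ into $\APS(X)$, and these two maps are mutually inverse since $\phi$ and $\phi^{-1}$ are. Therefore $\psi$ is a bijection and $|\APS(X)| = |\APS_m| = |\APS([|X|])|$.

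This argument is entirely routine, so there is no real obstacle; the only point requiring a moment's care is the purely notational matter that $\APS(X)$ was defined via witness permutations of $X$, so one must spell out that the pinnacle set of a word is determined entry-by-entry by local order comparisons, which is immediate from the definition of pinnacle. Everything else is a formal consequence of $\phi$ being an order isomorphism.
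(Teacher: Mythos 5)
Your proof is correct. The paper actually omits any proof of this lemma, treating it as immediate, and the order-isomorphism argument you spell out (transporting permutations and pinnacle sets along the unique increasing bijection $X \to [|X|]$, noting that pinnacles depend only on local order comparisons) is exactly the routine justification the paper implicitly relies on.
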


This calculation will be useful in the enumeration appearing in the next subsection.

We are now are able to characterize admissible pinnacle sets for signed permutations. 

\begin{thm}\label{thm:characterizing aps in type b}
The sets in $\APSB_n$ are exactly the sets $S = P(S) \sqcup N(S)$ for which
\begin{itemize}
\item $|P(S)| + |N(S)| \le (n-1)/2$,
\item $P(S) \subset [n]$,
\item $N(S) \subset - ([n] \setminus P(S))$, and 
\item $N(S) \in \APS(-([n]\setminus P(S)))$. 
\end{itemize}
\end{thm}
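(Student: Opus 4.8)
The plan is to prove the two directions separately. For the forward direction, suppose $S = P \sqcup N \in \APSB_n$ with witness $w \in \Sym_n^B$. The first bullet is already known: any signed permutation of length $n$ has at most $(n-1)/2$ pinnacles (the pinnacle indices lie in $\{2,\dots,n-1\}$ and must be non-adjacent, exactly as in type $A$; this was implicitly used in Corollary~\ref{cor:any all positive set is admissible in type b}), so $|S| \le (n-1)/2$. The second and third bullets are immediate: $P = S \cap [n] \subset [n]$ by definition, and since $S$ is a signed subset, no element of $P$ has its negative in $N$, i.e. $N \subset -([n]\setminus P)$. The substantive claim is the fourth bullet. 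The idea is that the pinnacles of $w$ lying in $-[n]$ are determined by the relative order of the negative entries of $w$ together with whichever positive entries sit between them; but a positive entry always exceeds any negative entry, so a negative value $w(i) \in N$ is a pinnacle of $w$ exactly when it is a pinnacle of the subword of $w$ obtained by deleting all positive entries. That subword is a permutation of the set $-([n]\setminus P)$ of negative values appearing in $w$ (note the values appearing as positive entries of $w$ are exactly $P$, since $w$ is a bijection on $\pm[n]$ and $P \subseteq \{|w(1)|,\dots,|w(n)|\} = [n]$), which is a totally ordered set, so by Definition~\ref{def:pinnacles X} this exhibits $N \in \APS(-([n]\setminus P))$.

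For the converse, assume $S = P \sqcup N$ satisfies all four bullets. We want a witness $w \in \Sym_n^B$ with $\Pin^B(w) = S$. Let $M \coloneqq -([n]\setminus P)$, a totally ordered set of size $n - |P|$. Since $N \in \APS(M)$, pick a canonical witness $v$ for $N$ among permutations of $M$. The plan is to build $w$ by threading the elements of $P$ into $v$ so that (i) each $p \in P$ becomes a new pinnacle, (ii) no element of $M$ that was a non-pinnacle of $v$ becomes a pinnacle of $w$, and (iii) every element of $M$ that was a pinnacle of $v$ stays a pinnacle. Because every element of $P$ is larger than every element of $M$, inserting a positive value $p$ immediately after a non-pinnacle (vale or endpoint-adjacent) position of $v$ automatically makes $p$ a pinnacle and destroys no existing pinnacle structure among the negatives — provided we insert the $p$'s at distinct, suitably spaced positions. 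Concretely, one can mimic the canonical-witness construction: interleave as $s_1'\, s_1\, s_2'\, s_2 \cdots$, where the $s_i'$ run through $M$ in increasing order and the $s_i$ run through $P$ in increasing order, and verify directly that the positive entries are exactly the pinnacles coming from $P$ while the pattern of pinnacles among the negative entries reproduces that of $v$. The length bound $|P| + |N| \le (n-1)/2$ is exactly what guarantees there is ``enough room'' in $M$ to carry out this interleaving, i.e. that $v$ can be chosen on a set large enough to absorb the insertions without forcing unwanted adjacencies. This is essentially the computation already carried out in Lemma~\ref{lem:AnyIntegersAreAPS} and Corollary~\ref{cor:any all positive set is admissible in type b}, now relativized to the set $M$.

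The main obstacle is the bookkeeping in the converse: one must check that inserting the positive values does not accidentally merge two would-be pinnacles into adjacent positions (which would be illegal) and does not turn a former vale of $v$ into a pinnacle, all while respecting that we only have $n$ positions total. The cleanest way to handle this is to not insert into an arbitrary witness $v$ but to start from the \emph{canonical} witness for $N$ in $\APS(M)$, whose structure (smallest non-pinnacles adjacent to smallest pinnacles, all pinnacles of $v$ sitting at every other position near the front) makes the interleaving with $P$ transparent; then the resulting $w$ is, up to relabeling, the canonical witness of Definition~\ref{defn:canonical witness}, and the verification reduces to the same local check ``a positive entry flanked by negatives is a pinnacle, a negative entry flanked by a positive on either side is not.'' A secondary point to be careful about is the edge cases $P = \emptyset$ (which reduces to Lemma~\ref{lem:positive aps in bijective with negative aps} combined with Lemma~\ref{lem:aps of generic set}) and $N = \emptyset$ (Corollary~\ref{cor:any all positive set is admissible in type b}), which should be noted as consistency checks rather than treated separately.
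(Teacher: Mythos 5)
Your converse direction is essentially the paper's proof: starting from the canonical witness of $N$ in $\APS(-([n]\setminus P))$ and interleaving the elements of $P$ after the block containing the negative pinnacles produces exactly the canonical witness of Definition~\ref{defn:canonical witness}, and the bound $|P|+|N|\le (n-1)/2$ guarantees enough non-pinnacle slots. That part is fine.

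The forward direction has a genuine gap, concentrated in the fourth bullet. Your argument rests on two claims that are both false. First, ``the values appearing as positive entries of $w$ are exactly $P$'': a witness can have positive non-pinnacle entries (e.g.\ $w=12345$ has all entries positive and $P=\emptyset$), so the subword of negative entries need not be a permutation of $-([n]\setminus P)$ at all. Second, and more seriously, a negative entry is \emph{not} a pinnacle of $w$ ``exactly when'' it is a pinnacle of the negative subword: deleting a positive entry can create new pinnacles among the negatives. Concretely, take $w=\bar{2}\,\bar{1}\,3\,\bar{5}\,\bar{4}\in\Sym_5^B$, so $S=\Pin(w)=\{3\}$, $P=\{3\}$, $N=\emptyset$. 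The negative subword is $\bar{2}\,\bar{1}\,\bar{5}\,\bar{4}$, whose pinnacle set is $\{\bar{1}\}\ne N$. So the subword only shows that $N$ is \emph{contained in} some admissible pinnacle set of the negatives, not that $N$ itself is one. (The conclusion $N=\emptyset\in\APS(-([5]\setminus\{3\}))$ is still true here, witnessed by $\bar{5}\,\bar{4}\,\bar{2}\,\bar{1}$; it is only your proposed witness that fails.) The paper dismisses necessity as ``clear,'' but an actual argument is needed, and the deletion argument is not it. A correct route: for the $i$-th smallest element $\nu_i$ of $N$, the neighbors of the positions of $\nu_1,\dots,\nu_i$ in a witness give at least $i+1$ distinct non-pinnacle values below $\nu_i$; each such value $x$ is negative (being below $\nu_i<0$) and satisfies $-x\notin P$ (else both $x$ and $-x$ would appear in $w$), so together with $\nu_1,\dots,\nu_{i-1}$ there are at least $2i$ elements of $-([n]\setminus P)$ below $\nu_i$. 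This is exactly the relabeled admissibility criterion of \cite[Theorem~1.8]{PinnaclesTypeA} for $N$ inside the totally ordered set $-([n]\setminus P)$.
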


\begin{proof}
First of all, it is clear that any admissible pinnacle set in $\Sym_n^B$ must satisfy the listed requirements.

Now suppose that a set $S$ satisfies the listed requirements, with $P \coloneqq P(S) = \{p_1 < \cdots < p_k\}$ and $N \coloneqq N(S) = \{n_1 < \cdots < n_r\}$. In light of the last requirement, let $w$ be the canonical witness permutation of the set $(-([n]\setminus P))$, having pinnacle set $N$.
That is,
$$w = i_1 \ n_1 \ i_2 \ n_2 \ \cdots \ i_r \ n_r \ i_{r+1} \ i_{r+2} \ i_{r+3} \ \cdots \ i_{n-k-r}$$
where $i_j < i_{j+1}$ and $\{i_1, \ldots, i_{n-k-r}\} = -([n]\setminus P) \setminus N$.  Then
$$i_1 \ n_1 \ i_2 \ n_2 \ \cdots \ i_r \ n_r \ i_{r+1} \ p_1 \ i_{r+2} \ p_2 \ i_{r+3} \ \cdots \ p_k \ i_{r+k+1} \ i_{r+k+2} \ \cdots \ i_{n-r-k}$$
is a canonical witness for $S = P \sqcup N$ in $\Sym_n^B$. Hence $S \in \APSB_n$.
\end{proof}

\subsection{Enumeration of admissible pinnacle sets}

The conditions listed in Theorem~\ref{thm:characterizing aps in type b} inform our enumeration of the admissible pinnacle sets in $\Sym_n^B$. In particular, we will construct these sets by first fixing a collection $P$ of positive pinnacles and then determining how many sets $N$ of negative pinnacles exist for which $P \cup N$ is admissible in $\Sym_n^B$. 

In order not to have too many pinnacles (that is, not more than $\lfloor (n-1)/2 \rfloor$), we need to understand the following value. 

\begin{defn}
Let $\pfrk_n(d)$ be the number of admissible pinnacle sets in $\Sym_n$ having cardinality at most $d$. That is,   
$$\pfrk_n(d)\coloneqq|\{S\in \APS_n \ : \ |S|\le d\}|.$$
\end{defn}

This statistic has a particularly nice formula.

\begin{prop}\label{prop:size of pfrk}
For all integers $d \in \left[0,\left\lfloor (n-1)/2\right\rfloor\right]$,
$$\pfrk_n(d) = \binom{n-1}{d}.$$
\end{prop}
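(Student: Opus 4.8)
The plan is to establish the formula by a direct bijective/combinatorial argument that leverages the known characterization of admissible pinnacle sets in type $A$. Recall from \cite{PinnaclesTypeA} that a set $S = \{s_1 < \cdots < s_d\} \subseteq [n]$ is admissible in $\Sym_n$ precisely when $s_i \ge 2i+1$ for all $i$ (equivalently, the $i$-th smallest pinnacle is at least $2i+1$), and that this forces $|S| \le \lfloor (n-1)/2 \rfloor$. So $\pfrk_n(d)$ counts the number of ways to choose, for each $j \in \{0,1,\ldots,d\}$, an admissible pinnacle set of size exactly $j$ inside $[n]$; equivalently it counts the increasing sequences $s_1 < s_2 < \cdots < s_j$ in $[n]$ with $s_i \ge 2i+1$, summed over $0 \le j \le d$.

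First I would set up the count of size-exactly-$j$ admissible sets. The constraint $s_i \ge 2i+1$ with $s_i \le n$ and $s_i$ strictly increasing is a standard "staircase" lattice path / ballot-type condition. Substituting $t_i \coloneqq s_i - 2i$ turns the conditions $2i+1 \le s_i$, $s_i < s_{i+1}$, $s_j \le n$ into $t_i \ge 1$, $t_i < t_{i+1}$ (since $s_{i+1} - s_i \ge 2$ becomes $t_{i+1} - t_i \ge 0$, but we must be careful: strict increase $s_{i+1} \ge s_i+1$ gives $t_{i+1} \ge t_i - 1$, which is weaker — so I would instead keep the conditions as $s_i \ge 2i+1$ and just $s_i < s_{i+1}$, and count directly). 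In fact the cleanest route: the number of strictly increasing $(s_1,\ldots,s_j)$ with $2i+1 \le s_i \le n$ is, by the Lindström–Gessel–Viennot / reflection principle for such banded conditions, equal to $\binom{n-1}{j} - \binom{n-1}{j-1}$ — the number of standard-type lattice paths avoiding a diagonal, which are the ballot numbers. I would verify this small formula carefully, perhaps by a direct induction on $n$ or by citing the relevant count from \cite{PinnaclesTypeA}, where the number of admissible pinnacle sets of size exactly $j$ in $\Sym_n$ is given explicitly.

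Then I would finish by telescoping:
\begin{equation*}
\pfrk_n(d) = \sum_{j=0}^{d} \left( \binom{n-1}{j} - \binom{n-1}{j-1} \right) = \binom{n-1}{d},
\end{equation*}
where the sum collapses because it is telescoping (the $\binom{n-1}{-1} = 0$ term anchors the bottom). The constraint $d \le \lfloor (n-1)/2 \rfloor$ guarantees $d \le n-1$ so that $\binom{n-1}{d}$ is the honest binomial coefficient and all intermediate terms $\binom{n-1}{j} - \binom{n-1}{j-1}$ are nonnegative (ballot numbers), which is a useful sanity check though not logically needed for the telescoping identity.

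The main obstacle I anticipate is pinning down the size-exactly-$j$ count $\binom{n-1}{j} - \binom{n-1}{j-1}$ with a self-contained argument rather than quoting it. The cleanest self-contained proof is likely a bijection between admissible size-$j$ subsets of $[n]$ and lattice paths from $(0,0)$ staying weakly below a line, enumerated by the reflection principle; alternatively, one can induct on $d$ directly, showing $\pfrk_n(d) - \pfrk_n(d-1) = \binom{n-1}{d} - \binom{n-1}{d-1}$ equals the number of size-exactly-$d$ admissible sets, and proving that latter equality by a separate induction on $n$ using Pascal's rule together with the recursive structure of the condition $s_i \ge 2i+1$. I would choose whichever of these the authors' earlier sections make most convenient; if \cite{PinnaclesTypeA} already records the exact-size count, the proof reduces to the one-line telescoping sum above.
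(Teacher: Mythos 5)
Your proof is correct in substance but takes a genuinely different route from the paper. The paper never invokes the characterization of admissible pinnacle sets from \cite{PinnaclesTypeA}; instead it partitions the sets counted by $\pfrk_n(d)$ according to whether they contain $n$, exhibits insertion/deletion bijections on canonical witnesses to show the two blocks are counted by $\pfrk_{n-1}(d-1)$ and $\pfrk_{n-1}(d)$ respectively, and concludes via the Pascal recurrence together with the base cases $\pfrk_n(0)=1$ and $\pfrk_n(1)=n-1$. You instead use the characterization $s_i \ge 2i+1$ to get the exact-size count $\binom{n-1}{j}-\binom{n-1}{j-1}$ and then telescope. Both are valid; the paper's argument is self-contained (needing only the canonical witness construction already set up in Section 2), while yours outsources the hard step to a ballot-number count but in exchange yields the finer statistic (the number of admissible sets of each exact size $j$), which the paper's recurrence does not directly produce. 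The one soft spot in your write-up is that the exact-size count is asserted rather than proved: your formula is correct (it checks against small cases and sums to $\binom{2k}{k}$ for $n=2k+1$, consistent with \cite{PinnaclesTypeA}), and it does follow from $s_i\ge 2i+1$ by a standard reflection argument or by the enumeration by size recorded in \cite{PinnaclesTypeA}, but as written your proof is a plan with a cited-or-to-be-verified kernel rather than a complete argument; your abandoned substitution $t_i = s_i - 2i$ should simply be deleted. If you want to keep your route, the cleanest fix is to prove the exact-size count by the same containment-of-$n$ dichotomy the paper uses for the cumulative count --- at which point the two proofs essentially converge.
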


\begin{proof}
 
The admissible pinnacle sets in $\Sym_n$ having cardinality at most $d$ can be partitioned into two sets: those that contain $n$, and those that do not. We claim that the first set is counted by $\pfrk_{n-1}(d-1)$, and the second set is counted by $\pfrk_{n-1}(d)$. 

Suppose, first, that $S \in \APS_n$ such that $n \in S$ and $|S| = k \le d$. Let $w \in \Sym_n$ be the canonical witness for $S$. Deleting $n$ from the one-line notation of $w$ will produce a permutation $v \in \Sym_{n-1}$ with $\Pin(v) = S \setminus \{n\}$. Conversely, given $T \in \APS_{n-1}$ with $|T| = k-1$, let $u \in \Sym_{n-1}$ be the canonical witness for $T$. Inserting $n$ between the non-pinnacles $u(2k-1)$ and $u(2k)$ will produce a permutation in $\Sym_n$ whose pinnacle set is $T \cup \{n\}$. This establishes the first part of the claim.

For the second part of the claim, suppose that $S \in \APS_n$ with $n \not\in S$ and $|S| = k \le d$. Let $w \in \Sym_n$ be the canonical witness for $S$. Because $n \not\in S$, we have $w(n) = n$. Thus the permutation $w(1) \cdots w(n-1) \in \Sym_{n-1}$ has pinnacle set $S$. Conversely, if $v \in \Sym_{n-1}$ has pinnacle set $S$, then appending $n$ to the end of $v$ will produce a permutation in $\Sym_n$ that also has pinnacle set $S$.

This gives the binomial recurrence
$$\pfrk_n(d) = \pfrk_{n-1}(d-1) + \pfrk_{n-1}(d).$$
To complete the argument, notice that $\pfrk_n(0) = 1$ and $\pfrk_n(1) = 1 + (n-2) = n-1$, for all positive integers $n\ge 2$.
\end{proof}

Combining Theorem \ref{thm:characterizing aps in type b}, which characterizes admissible pinnacle sets for signed permutations, with the enumeration in Proposition \ref{prop:size of pfrk}, we now count the admissible pinnacle sets for signed permutations.

\begin{thm}\label{T:countB}
If $n\geq 2$, then
$$\left|\APSB_n\right| = \sum_{k=0}^{\left\lfloor\frac{n-1}{2}\right\rfloor}\binom{n}{k} \binom{n-1-k}{\left\lfloor\frac{n-1}{2}\right\rfloor-k}.$$
\end{thm}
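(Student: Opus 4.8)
The plan is to count the admissible pinnacle sets in $\Sym_n^B$ by stratifying them according to the positive part $P = P(S)$, exactly as suggested by the characterization in Theorem~\ref{thm:characterizing aps in type b}. Fix $k \coloneqq |P|$; since any admissible $S$ has $|S| \le \lfloor(n-1)/2\rfloor$, we must have $0 \le k \le \lfloor(n-1)/2\rfloor$. First I would observe that, by Corollary~\ref{cor:any all positive set is admissible in type b}, \emph{every} $k$-subset $P \subset [n]$ is the positive part of some admissible set (take $N = \emptyset$), and there are $\binom{n}{k}$ such choices of $P$. So it remains, for each fixed $P$ of size $k$, to count the signed subsets $N$ for which $P \sqcup N$ is admissible in $\Sym_n^B$.

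By the last two bullets of Theorem~\ref{thm:characterizing aps in type b}, the admissible choices of $N$ are precisely the elements of $\APS(-([n]\setminus P))$ that also satisfy the global size bound $|N| \le \lfloor(n-1)/2\rfloor - k$. The set $-([n]\setminus P)$ is a totally ordered set of size $n-k$, so by Lemma~\ref{lem:aps of generic set} the admissible pinnacle sets of $-([n]\setminus P)$ are equinumerous with those of $[n-k]$, and more precisely the bijection preserves cardinality, so the number of valid $N$ equals the number of $S' \in \APS_{n-k}$ with $|S'| \le \lfloor(n-1)/2\rfloor - k$. This count is exactly $\pfrk_{n-k}\!\left(\lfloor(n-1)/2\rfloor - k\right)$, and here is the one point that needs a small check: Proposition~\ref{prop:size of pfrk} gives the clean formula $\pfrk_m(d) = \binom{m-1}{d}$ only for $d \in [0, \lfloor(m-1)/2\rfloor]$. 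With $m = n-k$ and $d = \lfloor(n-1)/2\rfloor - k$, I would verify that $d \le \lfloor(m-1)/2\rfloor = \lfloor(n-k-1)/2\rfloor$, i.e. $\lfloor(n-1)/2\rfloor - k \le \lfloor(n-k-1)/2\rfloor$, which holds for $k \ge 0$ (removing $k$ from $n$ decreases the floor by at least $\lfloor k/2\rfloor$ but we are subtracting a full $k$), and that $d \ge 0$, which is exactly the range of the outer sum. So $\pfrk_{n-k}\!\left(\lfloor(n-1)/2\rfloor - k\right) = \binom{n-k-1}{\lfloor(n-1)/2\rfloor - k}$.

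Putting the pieces together, the number of admissible sets with $|P(S)| = k$ is $\binom{n}{k}\binom{n-1-k}{\lfloor(n-1)/2\rfloor - k}$, and summing over $k$ from $0$ to $\lfloor(n-1)/2\rfloor$ yields the claimed formula. I would also note that the decomposition into the summands is genuinely a partition of $\APSB_n$: distinct values of $k$ give disjoint families, and every admissible $S$ contributes to exactly one $k$, namely $k = |S \cap [n]|$.

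The main obstacle is the bookkeeping around the floor function in the index range: one must be careful that the size constraint $|N| \le \lfloor(n-1)/2\rfloor - k$ lands inside the regime where Proposition~\ref{prop:size of pfrk} applies, and that when $k$ is at the top of its range the bound $d=0$ still correctly gives $\binom{n-1-k}{0}=1$ (reflecting $N = \emptyset$ being the only option). A secondary point worth stating explicitly is that the cardinality-preserving property of the bijection in Lemma~\ref{lem:aps of generic set} (and behind it Lemma~\ref{lem:positive aps in bijective with negative aps}, which shifts values without changing the number of pinnacles) is what legitimizes replacing "admissible subsets of $-([n]\setminus P)$ of bounded size" by "$\pfrk_{n-k}$ of that bound" — so I would make sure that lemma is invoked in its cardinality-aware form.
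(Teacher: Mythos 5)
Your proposal is correct and follows essentially the same route as the paper: fix the positive part $P$ of size $k$ (contributing $\binom{n}{k}$), count the admissible negative parts via Lemma~\ref{lem:aps of generic set} and $\pfrk_{n-k}\left(\left\lfloor\frac{n-1}{2}\right\rfloor-k\right) = \binom{n-1-k}{\left\lfloor\frac{n-1}{2}\right\rfloor-k}$ from Proposition~\ref{prop:size of pfrk}, and sum over $k$. Your explicit check that $\left\lfloor\frac{n-1}{2}\right\rfloor-k$ lies in the range where Proposition~\ref{prop:size of pfrk} applies is a detail the paper leaves implicit, but it does not change the argument.
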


\begin{proof}
The main idea of the proof will be to construct admissible pinnacle sets in $\Sym_n^B$ following the requirements of Theorem~\ref{thm:characterizing aps in type b}. First, we will select a set $P$ of positive pinnacles. In other words, $P \subset [n]$ and $|P| \le (n-1)/2$. Then we add to it any set $N \subset -([n]\setminus P)$ that is in $\APS(-([n]\setminus P))$, so long as $|P| + |N| \le (n-1)/2$. We are interested in the number of such sets,  and Lemma~\ref{lem:aps of generic set} says that we only need to care about the size of $P$ in this process. This and Lemma~\ref{lem:positive aps in bijective with negative aps} mean that such sets $N$ can be counted in terms of admissible pinnacle sets of $\Sym_{n-|P|}$.

Fix an integer $k \in [0, (n-1)/2]$, and choose a $k$-element subset $P \subset [n]$. There are $\binom{n}{k}$ ways to do this. We can supplement $P$ with any $r$-element admissible pinnacle set $N \subset -([n]\setminus P)$, as long as $k+r \le \lfloor(n-1)/2\rfloor$. The number of ways to do this is
$$\pfrk_{n-k}\left(\left\lfloor\frac{n-1}{2}\right\rfloor-k\right).$$
Therefore, by Proposition~\ref{prop:size of pfrk}, the number of admissible pinnacle sets in $\Sym_n^B$ is
$$\sum_{k=0}^{\left\lfloor\frac{n-1}{2}\right\rfloor}\binom{n}{k} \binom{n-1-k}{\left\lfloor\frac{n-1}{2}\right\rfloor-k},$$
as desired.
\end{proof}

In Table~\ref{table:counting aps in type b}, we give the number of signed admissible pinnacle sets in type $B$ for $3 \le n\leq 15$, while permutations in $\Sym_1^B$ and $\Sym_2^B$ have no pinnacles. This appears in the OEIS as sequence \cite[\href{http://oeis.org/A359066}{A359066}]{OEIS}.  The even-indexed terms in the table appear in \cite[\href{http://oeis.org/A240721}{A240721}]{OEIS} and the odd-indexed terms appear in \cite[\href{http://oeis.org/A178792}{A178792}]{OEIS}. 

\begin{table}[htbp]
{\renewcommand{\arraystretch}{2}$\begin{array}{c||c|c|c|c|c|c|c|c|c|c|c|c|c}
n & 3 & 4 & 5 & 6 & 7 & 8 & 9 & 10 & 11 & 12 & 13 & 14 & 15\\
\hline
\hline
\left\vert \APSB_n \right\vert & 5 & 7 & 31 & 49 & 209 & 351 & 1471 & 2561 & 10625 & 18943 & 78079 & 141569 & 580865
\end{array}$}
\vspace{.1in}
\caption{The number of admissible pinnacle sets in $\Sym_n^B$, for $3 \le n \le 15$.}\label{table:counting aps in type b}
\end{table}

In the next Section, we will be able to answer the analogous enumerative question in type D (see Corollary \ref{C:D}).

\section{Relating admissible pinnacle sets in types  \texorpdfstring{$A$, $B$, and $D$}{A, B, D}}\label{sec: relating A, B, and D}

There is a natural embedding of $\Sym_n$ in $\Sym_n^D$, and of $\Sym_n^D$ in $\Sym_n^B$. Having spent Section~\ref{sec:admissible signed pinnacles} analyzing pinnacle sets that are admissible in $\Sym_n^B$, it is natural to wonder how these sets are related to those that are admissible in $\Sym_n^D$ or, for those elements of $\APSB_n$ without negative values, to those that are admissible in $\Sym_n$. We now give complete characterization of each of these relationships. 

\subsection{Comparing admissible pinnacle sets in types \texorpdfstring{$B$ and $D$}{B and D}}
As mentioned before, $\Sym_n^D \subset \Sym_n^B$, thus it is natural to investigate the relationship between those sets that are admissible as pinnacle sets in type $B$ and those that are in type $D$. It is, perhaps, not surprising that this relationship depends on the parity of $n$.

As a first step in this analysis, we identify a technique that will be handy in proving that a set is admissible for type $D$. 

\begin{lem}\label{lem:flipping last letter to get another witness}
Suppose that $w \in \Sym_n^B$ is a witness for a pinnacle set $S$. If $w(n-1) > \pm w(n)$ or if $w(n-1) < \pm w(n)$, then the permutation $w'$, defined by
$$w'(i) = \begin{cases} w(i) & i < n \text{ and}\\ -w(i) & i = n,\end{cases}$$
is also a witness for $S$. Moreover, $S \in \APSD_n$.
\end{lem}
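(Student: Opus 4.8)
The plan is to show first that $w'$ has the same pinnacle set as $w$, and then that $w'$ (or a further small modification of it) lies in $\Sym_n^D$, giving $S \in \APSD_n$. For the first part, note that $w'$ and $w$ agree in positions $1,\dots,n-1$, so they can only possibly differ in their pinnacle status at positions $n-2$ and $n-1$ (position $n$ is never a peak). The value $w'(n-2) = w(n-2)$ is unaffected; the question is whether flipping the sign of the last letter changes whether $w(n-1)$ is a peak. A peak at position $n-1$ requires $w(n-2) < w(n-1) > w(n)$. The hypothesis is that $w(n-1)$ compares the same way to both $w(n)$ and $-w(n)$ (either strictly greater than both, or strictly less than both). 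In the first case, $w(n-1)$ is a peak in $w$ iff $w(n-2) < w(n-1)$, and since $w(n-1) > -w(n) = w'(n)$ as well, $w(n-1)$ is a peak in $w'$ under exactly the same condition. In the second case, $w(n-1)$ is a peak in neither $w$ nor $w'$, since it fails to exceed the last letter in both. Either way $\Peak(w') = \Peak(w)$ restricted to positions $\le n-1$, and hence $\Pin(w') = \Pin(w) = S$, because the values in positions $1,\dots,n-1$ are unchanged. So $w'$ is a witness for $S$.

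For the second part, recall that $\Sym_n^D$ consists of the signed permutations with an even number of negative entries. The permutations $w$ and $w'$ differ only in the sign of the last letter, so exactly one of them has an even number of negative entries; that one lies in $\Sym_n^D$. Since both are witnesses for $S$, we conclude $S \in \APSD_n$.

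The only subtlety — and the step I would be most careful to phrase correctly — is the hypothesis "$w(n-1) > \pm w(n)$ or $w(n-1) < \pm w(n)$," which should be read as: $w(n-1)$ is either greater than both of $\pm w(n)$, or less than both. This is the precise condition under which toggling the sign of $w(n)$ cannot create or destroy a peak at position $n-1$; a sign flip matters only when $w(n-1)$ lies strictly between $w(n)$ and $-w(n)$, i.e. when $-|w(n)| < w(n-1) < |w(n)|$, and the hypothesis excludes exactly this case. I would make sure to state explicitly that position $n-1$ is the only place where the comparison could have changed, since positions $\le n-2$ see identical three-term windows in $w$ and $w'$, and position $n$ is an endpoint and thus never a peak. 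No genuine obstacle arises here; the argument is short once the reading of the hypothesis is pinned down.
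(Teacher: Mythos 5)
Your proposal is correct and follows essentially the same route as the paper's proof: the paper likewise observes that the sign flip preserves all consecutive comparisons (which is exactly the content of your careful analysis at position $n-1$) and that exactly one of $w,w'$ has an even number of negative entries. Your write-up simply makes explicit the case analysis that the paper compresses into the phrase ``none of the inequalities between consecutive letters has been altered.''
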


\begin{proof}
First note that $w'$ is an element of $\Sym_n^B$ because changing the sign of the last letter does not alter the fact that this is a signed permutation on $\pm[n]$. Next observe that the pinnacle set has not changed from $w$ to $w'$ because none of the inequalities between consecutive letters has been altered. Finally, note that the numbers of negative values in $w$ and in $w'$ differ by $1$, meaning that one of these permutations is in $\Sym_n^D$ while the other is in $\Sym_n^B \setminus \Sym_n^D$.
\end{proof}

We will call on the previous result often throughout our arguments in this section, beginning with a description of the simple relationship between $\APSB_n$ and $\APSD_n$.

\begin{thm}\label{T:2kBD}
For $k\geq 1$,
$\APSB_{2k} = \APSD_{2k}$. 
\end{thm}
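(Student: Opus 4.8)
The plan is to show the two inclusions separately, with the nontrivial direction being $\APSB_{2k} \subseteq \APSD_{2k}$ (the reverse inclusion is immediate since $\Sym_{2k}^D \subset \Sym_{2k}^B$). So suppose $S \in \APSB_{2k}$; we want to produce a witness lying in $\Sym_{2k}^D$. The natural first move is to invoke Theorem~\ref{thm:characterizing aps in type b} and the construction in its proof to take the canonical witness $w \in \Sym_{2k}^B$ for $S$, whose one-line notation has the interleaved shape
$$w = i_1 \ n_1 \ i_2 \ n_2 \ \cdots \ i_r \ n_r \ i_{r+1} \ p_1 \ \cdots \ p_k' \ i_{?} \ i_{?+1}\ \cdots$$
ending in an increasing run of non-pinnacle values. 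If this $w$ is already in $\Sym_{2k}^D$ we are done, so assume it has an odd number of negative entries. The idea is then to modify $w$ at its last position only, using Lemma~\ref{lem:flipping last letter to get another witness}: flipping the sign of $w(2k)$ changes the parity of the number of negative entries, landing us in $\Sym_{2k}^D$, provided the hypothesis of that lemma holds — namely that $w(2k-1)$ and $w(2k)$ lie on the same side of $\pm w(2k)$, i.e. that $w(2k-1) > |w(2k)|$ or $w(2k-1) < -|w(2k)|$. So the crux is to arrange that the canonical witness (or some easily-described variant of it) ends in a pair of letters that is "safe" for the lemma.

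The key structural observation to exploit is that $n = 2k$ is even: with at most $\lfloor (2k-1)/2 \rfloor = k-1$ pinnacles, the canonical witness has at least $2k - 2(k-1) = 2$ non-pinnacle values sitting in a terminal increasing run $\cdots i_{t} \ i_{t+1}$ with $i_t < i_{t+1}$. Because these are the \emph{largest} available non-pinnacle values and they appear in increasing order at the end, $i_t$ and $i_{t+1}$ are consecutive in the relevant ordering, and in particular $i_{t+1} = \max$ of the whole non-pinnacle set; one should check that this forces $i_t < i_{t+1}$ with $i_t$ and $i_{t+1}$ never straddling $0$ in the "bad" way — concretely, if $i_{t+1} > 0$ then $i_t < i_{t+1}$ already; if $i_{t+1} < 0$ then all non-pinnacles are negative, so $i_t < i_{t+1} < 0 = -|{-}i_{t+1}| < |i_{t+1}|$, hmm — so one must instead note $w(2k-1) = i_t < i_{t+1} = w(2k)$ and also $i_t < |i_{t+1}|$ when $i_{t+1}>0$, giving $w(2k-1) < \pm w(2k)$ in the first bullet of the lemma. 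In the case $i_{t+1} < 0$ we have $w(2k-1) = i_t$ and $|w(2k)| = -i_{t+1}$, and since there are at least two negative non-pinnacles here one can instead flip to make the count even directly, or observe $i_t < i_{t+1}$ still gives $w(2k-1) < w(2k) < 0 < |w(2k)|$, so $w(2k-1) < \pm w(2k)$ fails only against $-|w(2k)|$; here I would instead flip the \emph{non-last} large non-pinnacle or simply re-run the canonical construction choosing which of the two top non-pinnacles is positioned last. In short: since there are $\ge 2$ terminal non-pinnacles and $n$ is even, there is enough freedom to make the last two letters satisfy a one-sided inequality, whence Lemma~\ref{lem:flipping last letter to get another witness} delivers a type-$D$ witness.

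I expect the main obstacle to be exactly this bookkeeping about the sign pattern of the terminal non-pinnacle run and verifying that the hypothesis of Lemma~\ref{lem:flipping last letter to get another witness} can always be met when $n=2k$ — i.e. showing the "evenness of $n$" really does guarantee two spare non-pinnacles at the end in a configuration amenable to a sign flip. One clean way to sidestep casework: observe that if the canonical witness $w$ has an odd number of negatives, then since it ends in an increasing run of $\ge 2$ non-pinnacles $\cdots \ i_t \ i_{t+1}$ with $i_t < i_{t+1}$, we have $w(n-1) < w(n)$, and $w(n) = i_{t+1}$ is either the largest non-pinnacle overall (so $|w(n)| \ge |w(n-1)|$ and the inequality $w(n-1) < \pm w(n)$ holds against $+|w(n)|$) or — the degenerate case where all non-pinnacles are negative — we may instead apply the lemma to flip $w(n-1)$ after a transposition, or simply note that in that degenerate case $S \subseteq -[n]$ and appeal to Lemma~\ref{lem:positive aps in bijective with negative aps} together with the fact that $\APS_{2k}$ witnesses can be built in $\Sym_{2k}^D$ directly. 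Assembling these cases gives $S \in \APSD_{2k}$, completing the nontrivial inclusion and hence the theorem.
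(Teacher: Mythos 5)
Your overall strategy is exactly the paper's: take the canonical witness $w$ for $S \in \APSB_{2k}$, observe that $|S| \le \lfloor(2k-1)/2\rfloor = k-1$ forces at least $2k-2(k-1)=2$ trailing non-pinnacles, and invoke Lemma~\ref{lem:flipping last letter to get another witness}. Where you go astray is in verifying the lemma's hypothesis, and this is a genuine gap rather than mere bookkeeping. By Definition~\ref{defn:canonical witness}, \emph{every} non-pinnacle value of the canonical witness is negative ($S' \subset -[n]$ by construction), so your case ``$i_{t+1} > 0$'' never arises, and the situation you treat as a problematic degenerate case --- all non-pinnacles negative --- is the only case, and it is the easy one. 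With $w(2k-1) < w(2k) < 0$ (the trailing run is increasing), we get $w(2k-1) < w(2k)$ and $w(2k-1) < 0 < -w(2k)$, hence $w(2k-1) < \pm w(2k)$ and the lemma applies directly. Your claim that the inequality ``fails against $-|w(2k)|$'' is an arithmetic slip: since $w(2k) < 0$ we have $-|w(2k)| = w(2k)$, and $w(2k-1) < w(2k)$ holds.

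The fallbacks you propose to patch this non-problem are themselves unsound, which is why the gap matters. Lemma~\ref{lem:flipping last letter to get another witness} only licenses flipping the sign of the \emph{last} letter, not $w(2k-1)$ or some interior non-pinnacle, and ``re-running the canonical construction choosing which of the two top non-pinnacles is positioned last'' would break the defining increasing order of the tail, so you would need to re-argue that the result is still a witness. More seriously, ``all non-pinnacles are negative'' does not imply $S \subseteq -[n]$: for instance $S = \{1,2\}$ has canonical witness $\bar{5}\,1\,\bar{4}\,2\,\bar{3}$ with all non-pinnacles negative but $S$ entirely positive, so the proposed appeal to Lemma~\ref{lem:positive aps in bijective with negative aps} rests on a false inference. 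Deleting the spurious case split and simply citing the sign convention of Definition~\ref{defn:canonical witness} repairs the argument and recovers the paper's proof.
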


\begin{proof} 
Certainly anything admissible in type $D$ is also admissible in type $B$, because signed permutations include the signed permutations in type $D$. It remains to show that any pinnacle set that is admissible in $\Sym_{2k}^B$ is also admissible in $\Sym_{2k}^D$. 
Fix $S\coloneqq \{s_1<\cdots < s_l\} \in \APSB_{2k}$. Because $l \le \left\lfloor(2k-1)/2\right\rfloor$, we have $l\le k-1$. Then the canonical witness $w$ for $S$ satisfies the hypotheses of Lemma~\ref{lem:flipping last letter to get another witness}, and so in fact $S \in \APSD_{2k}$. 
\end{proof}

The equality shown in Theorem~\ref{T:2kBD} relies on the fact that there are always at least two more non-pinnacles than there are pinnacles in signed permutations on $2k$ letters. This not necessarily true for signed permutations of an odd number of letters, and hence it is not surprising that the relationship between $\APSB_{2k+1}$ and $\APSD_{2k+1}$ has more nuance than the relationship presented in Theorem~\ref{T:2kBD}. Indeed, we will show that $\APSD_{2k+1}$ is a strict subset of $\APSB_{2k+1}$, and we will describe the elements of the latter that are not elements of the former.

\begin{lem}\label{lem:APSBnotDSize}
If $S \in \APSB_{2k+1} \setminus \APSD_{2k+1}$, then $|S|=k$.
\end{lem}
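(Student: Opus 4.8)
The plan is to show that a set $S \in \APSB_{2k+1}$ fails to be admissible in type $D$ precisely when every type $B$ witness for $S$ has an odd number of negative entries with no ``room'' to flip a sign. By Theorem~\ref{thm:characterizing aps in type b}, any $S \in \APSB_{2k+1}$ satisfies $|S| \le \lfloor (2k+1-1)/2\rfloor = k$, so $|S| \le k$. I will argue that if $|S| \le k-1$, then $S \in \APSD_{2k+1}$, which forces $|S| = k$ for any $S \in \APSB_{2k+1}\setminus \APSD_{2k+1}$.

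The key step is an application of Lemma~\ref{lem:flipping last letter to get another witness}. Suppose $|S| = l \le k-1$. Take the canonical witness $w \in \Sym_{2k+1}^B$ for $S$, which has the form $s_1' \, s_1 \, s_2' \, s_2 \, \cdots \, s_l' \, s_l \, s_{l+1}' \, \cdots \, s_{2k+1-l}'$, with the pinnacles in even positions $2,4,\ldots,2l$ and all later entries being non-pinnacles in increasing order. Since $l \le k-1$, we have $2k+1-l \ge k+2 \ge l+2$, so the one-line notation ends with at least two trailing non-pinnacle entries $s_{2k-l}'$ and $s_{2k+1-l}'$ in positions $2k-l$ and $2k+1-l$; in particular positions $n-1$ and $n$ are both occupied by non-pinnacle values, and $w(n-1) = s_{2k-l}' < s_{2k+1-l}' = w(n)$. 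This is precisely one of the hypotheses of Lemma~\ref{lem:flipping last letter to get another witness} (indeed $w(n-1) < w(n)$ and also $w(n-1) < |w(n)|$ since $w(n)$ is the largest remaining non-pinnacle, which is either a small negative or, when $l$ is large enough, could be positive — in either case the inequality with $\pm w(n)$ that is needed holds because flipping the last sign only moves $w(n)$ further from or past $w(n-1)$ without creating a pinnacle). Hence by that lemma $S \in \APSD_{2k+1}$, contradicting $S \notin \APSD_{2k+1}$. Therefore $|S| = k$.

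The main obstacle I anticipate is verifying carefully that the canonical witness really does satisfy one of the two inequality conditions of Lemma~\ref{lem:flipping last letter to get another witness} — that is, that $w(n-1)$ and $\pm w(n)$ are comparable in the required one-sided way. The subtlety is that $w(n)$ could be negative (a small-magnitude ``leftover'' value) or positive, and flipping its sign changes its magnitude's position in the order; one must check that in the canonical arrangement the last two entries are both non-pinnacles so that no peak is created or destroyed regardless of the sign of $w(n)$, and that the relevant strict inequality $w(n-1) < w(n)$ or $w(n-1) < -w(n)$ (equivalently $w(n-1) < \pm w(n)$ as written in the lemma) holds. This reduces to the structural fact, already emphasized after Theorem~\ref{T:2kBD}, that when there are at least two more non-pinnacles than pinnacles the canonical witness has a ``flippable'' tail; the case $|S| \le k-1$ is exactly the regime where this surplus of two exists, so the argument goes through.
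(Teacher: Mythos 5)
Your proposal is correct and follows essentially the same route as the paper: apply Lemma~\ref{lem:flipping last letter to get another witness} to the canonical witness, using the fact that $|S| \le k-1$ forces the last two positions to be non-pinnacles. The only loose end is your hedge about $w(n)$ possibly being positive, which never occurs: by Definition~\ref{defn:canonical witness} every non-pinnacle value of the canonical witness lies in $-[n]$, so $w(2k) < w(2k+1) < 0$ and the hypothesis $w(2k) < \pm w(2k+1)$ holds immediately.
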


\begin{proof}
Fix $S \in \APSB_{2k+1}$ and let $w \in \Sym_{2k+1}^B$ be the canonical witness for $S$. If $|S| < k$, then both $w(2k)$ and $w(2k+1)$ are non-pinnacles and $w(2k) < w(2k+1) < 0$. In particular, the hypotheses of Lemma~\ref{lem:flipping last letter to get another witness} are satisfied by $w$, and so
$S \in \APSD_{2k+1}$. Hence, if $S \in \APSB_{2k+1} \setminus \APSD_{2k+1}$, then $|S| = k$. 
\end{proof}

One implication of Lemma~\ref{lem:APSBnotDSize} is that if $w \in \Sym_{2k+1}^B$ is a witness for $S \in \APSB_{2k+1}\setminus \APSD_{2k+1}$, then $w(3), w(5), \ldots, w(2k-1)$ are all vales. With Lemma~\ref{lem:APSBnotDSize} providing a first step toward understanding elements of $\APSB_{2k+1} \setminus \APSD_{2k+1}$, we now proceed to describe these sets more clearly.

\begin{lem}\label{lem:all non-pinnacles are always negative}
Fix $S \in \APSB_{2k+1} \setminus \APSD_{2k+1}$. In every witness permutation for $S$, the non-pinnacle values are all negative.
\end{lem}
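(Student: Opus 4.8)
Fix $S\in\APSB_{2k+1}\setminus\APSD_{2k+1}$. By Lemma~\ref{lem:APSBnotDSize} we know $|S|=k$, so any witness permutation $w\in\Sym_{2k+1}^B$ has exactly $k$ pinnacles among its $2k+1$ letters, leaving exactly $k+1$ non-pinnacle values. The plan is to argue by contradiction: suppose some witness $w$ for $S$ has a non-pinnacle value that is positive, and show that this forces the existence of a type $D$ witness for $S$, contradicting $S\notin\APSD_{2k+1}$. The key structural observation (already noted after Lemma~\ref{lem:APSBnotDSize}) is that with only $k$ pinnacles on $2k+1$ letters, the one-line notation must strictly alternate non-pinnacle/pinnacle in the first $2k$ positions, i.e.\ $w(1),w(3),\ldots,w(2k-1),w(2k+1)$ are the $k+1$ non-pinnacles (each a vale or an endpoint) and $w(2),w(4),\ldots,w(2k)$ are the $k$ pinnacles. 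In particular the non-pinnacles occupy the odd positions.

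The main step is a \emph{sorting argument on the non-pinnacle positions}. First I would establish that we may freely permute the multiset of non-pinnacle values among the odd positions $1,3,\ldots,2k+1$ without changing the pinnacle set, \emph{provided} the rearrangement keeps each interior non-pinnacle strictly below both of its pinnacle neighbors; since every non-pinnacle value is strictly smaller than every pinnacle value is \emph{not} automatically true, so more care is needed — instead I would argue that placing the non-pinnacles in \emph{increasing} order $w(1)<w(3)<\cdots<w(2k+1)$ (while fixing the pinnacles in their positions) still yields a witness, because any interior non-pinnacle was already below its two pinnacle neighbors in the original $w$, and the canonical-witness reasoning (Lemma preceding Example~\ref{ex:b but not d}) shows the smallest non-pinnacle values can always be redistributed this way. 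This reduces us to a witness $w$ in which the non-pinnacles increase left to right along the odd positions. Now if any non-pinnacle is positive, then in particular $w(2k+1)>0$, hence $w(2k-1)<w(2k+1)$ with $w(2k-1)$ a non-pinnacle; also $w(2k)$ is a pinnacle so $w(2k)>w(2k+1)$, giving $w(2k-1)<\pm w(2k+1)$, i.e.\ the hypothesis of Lemma~\ref{lem:flipping last letter to get another witness} holds. That lemma then produces a type $D$ witness, contradicting $S\notin\APSD_{2k+1}$.

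An alternative, and perhaps cleaner, route avoids the sorting step entirely: I would apply Lemma~\ref{lem:flipping last letter to get another witness} directly to a \emph{suitably chosen} witness. Given any witness $w$ with a positive non-pinnacle, I would first check whether $w(2k+1)$ itself satisfies $w(2k-1)<\pm w(2k+1)$; if so we are done as above. If not, the obstruction is that $w(2k-1)>|w(2k+1)|$, so $w(2k+1)$ is a large-magnitude negative value; but then I would use the freedom to swap $w(2k+1)$ with whichever positive non-pinnacle exists (it lies at some odd position $2j+1<2k+1$), verifying that this swap preserves all the peak/valley inequalities — the incoming positive value at position $2k+1$ is below both $w(2k)$ (a pinnacle) and nothing on its right, and the outgoing value, being the global minimum-ish non-pinnacle that was sitting below its neighbors, remains below its new neighbors. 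After the swap, $w(2k+1)>0$ and Lemma~\ref{lem:flipping last letter to get another witness} applies.

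The main obstacle I anticipate is the bookkeeping in the swap/rearrangement step: one must be sure that moving a positive non-pinnacle into the last slot (or sorting the non-pinnacles) does not accidentally \emph{create} a new pinnacle or destroy an existing one — precisely, that each relocated interior non-pinnacle still dips strictly below both neighbors, and that no pinnacle loses its strict dominance over a neighbor. This is where the hypothesis $|S|=k$ (forcing the rigid alternating pattern and thus giving us full control of which positions are non-pinnacles) does the real work, together with the canonical-witness property that non-pinnacle values can be taken as small as possible and arranged with the smallest ones adjacent to the smallest pinnacles. Once that combinatorial verification is in hand, the contradiction via Lemma~\ref{lem:flipping last letter to get another witness} is immediate, and since $w$ was an \emph{arbitrary} witness, we conclude that in every witness for $S$ all non-pinnacles are negative.
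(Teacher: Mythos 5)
Your overall strategy---assume some witness has a positive non-pinnacle and manufacture a type $D$ witness to contradict $S\notin\APSD_{2k+1}$---is the right one, and your setup (by Lemma~\ref{lem:APSBnotDSize} the non-pinnacles occupy the odd positions and each sits strictly below its neighbors) matches the paper. The gap is in how you reach the contradiction: both of your routes try to maneuver a positive non-pinnacle into position $2k+1$ so that Lemma~\ref{lem:flipping last letter to get another witness} can be applied, and neither maneuver is valid for an arbitrary witness. Sorting the non-pinnacle values increasingly along the odd positions can change the pinnacle set: for $w = 4\,5\,\bar{1}\,3\,\bar{2} \in \Sym_5^B$, with pinnacle set $\{3,5\}$ and non-pinnacles $4,-1,-2$, sorting produces $\bar{2}\,5\,\bar{1}\,3\,4$, whose pinnacle set is $\{5\}$ because $3<4$. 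Your alternative swap can likewise destroy a pinnacle: in $w=\bar{6}\,7\,2\,5\,\bar{4}\,1\,\bar{3}\in\Sym_7^B$, with pinnacle set $\{1,5,7\}$, exchanging the positive non-pinnacle $2$ with $\bar{3}$ gives $\bar{6}\,7\,\bar{3}\,5\,\bar{4}\,1\,2$, in which $1$ is no longer a pinnacle since $1<2$. You flag that ``more care is needed,'' but the care you supply (an appeal to canonical-witness reasoning) does not close the hole: these are claims about arbitrary witnesses, such configurations genuinely occur, and the only way to exclude them would be to invoke $S\notin\APSD_{2k+1}$---which is circular, since that is essentially what the lemma asserts.

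The repair is much simpler and bypasses Lemma~\ref{lem:flipping last letter to get another witness} entirely: if $w(2j+1)>0$ is a non-pinnacle, replace it \emph{in place} by $-w(2j+1)$. Because $w(2j+1)$ was already strictly below its neighbor(s), and $-w(2j+1)<0<w(2j+1)$, every comparison between consecutive letters is unchanged, so the resulting $w'$ is still a witness for $S$. But $w$ and $w'$ differ by exactly one in their number of negative letters, so one of them lies in $\Sym_{2k+1}^D$, contradicting $S\notin\APSD_{2k+1}$. This in-place sign flip is precisely the paper's argument.
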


\begin{proof}
Fix $S\in\APSB_{2k+1}\setminus \APSD_{2k+1}$ and $w\in \Sym_{2k+1}^B$ a witness for $S$. Following Lemma~\ref{lem:APSBnotDSize}, the non-pinnacles of $w$ are precisely $w(1), w(3), \ldots, w(2k+1)$. In particular, each $w(2i+1)$ is less than its immediate neighbors. Suppose, for the purpose of obtaining a contradiction, that $w(2j+1) > 0$ for some $j$. Let $w' \in \Sym_{2k+1}^B$ be the permutation obtained from $w$ by replacing $w(2j+1)$ by $-w(2j+1)$. Then $w'$ is still a witness for $S$. Either $w$ or $w'$ is in $\Sym_{2k+1}^D$, meaning that $S$ must be an element of $\APSD_{2k+1}$. This is a contradiction, so there is no such $j$. 
\end{proof}

In fact, the negative values of $S \in \APSB_{2k+1} \setminus \APSD_{2k+1}$ are enough to determine all of $S$. 

\begin{lem}\label{lem:NegPinnaclesDetermine}
Suppose that $S\in \APSB_{2k+1}\setminus \APSD_{2k+1}$, with $P \coloneqq S \cap \mathbb{N}$ and $N \coloneqq S \cap -\mathbb{N}$. Then the elements of $P$ are the smallest $k - |N|$ values in the set $[2k+1]\setminus -N$. In particular, $N$ determines $P$, and hence all of $S$.
\end{lem}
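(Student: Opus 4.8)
The plan is to use the canonical witness permutation together with Lemmas~\ref{lem:APSBnotDSize} and~\ref{lem:all non-pinnacles are always negative} to pin down exactly which values in $[2k+1] \setminus -N$ must be the positive pinnacles. Fix $S \in \APSB_{2k+1} \setminus \APSD_{2k+1}$ with $P = S \cap \mathbb{N}$ and $N = S \cap -\mathbb{N}$, and let $w$ be the canonical witness for $S$. By Lemma~\ref{lem:APSBnotDSize} we have $|S| = k$, so $|P| = k - |N|$, and the pinnacles of $w$ are exactly $w(2), w(4), \ldots, w(2k)$, while the non-pinnacles are $w(1), w(3), \ldots, w(2k+1)$. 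The non-pinnacle values form the set $\pm[2k+1] \setminus S$, and by Lemma~\ref{lem:all non-pinnacles are always negative} (applied to $w$ itself) all of these non-pinnacle values are negative. Since $S$ contains exactly $k$ signed values, the non-pinnacle set has $k+1$ elements and must therefore be precisely the $k+1$ most negative available values, i.e.\ $-([2k+1] \setminus P)$ restricted appropriately.

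The core step is a counting argument on how the values $[2k+1]$ get distributed between $P$ and the absolute values of the non-pinnacles. The positive values $[2k+1]$ split as $[2k+1] = P \sqcup Q$ where $Q$ consists of those positive integers $q$ for which either $q$ or $-q$ appears in $w$ but $q \notin P$; concretely, since all non-pinnacles are negative and $S = P \sqcup N$, the multiset of absolute values appearing in $w$ is all of $[2k+1]$, and the absolute values of the non-pinnacles are exactly $[2k+1] \setminus P$. Now $N \subseteq -([2k+1] \setminus P)$ is a signed subset, so $-N \subseteq [2k+1] \setminus P$. The non-pinnacles, being negative, have absolute values filling out $[2k+1] \setminus P$ entirely (there are $k+1$ of them and $|[2k+1] \setminus P| = 2k+1 - (k - |N|) = k + 1 + |N|$)—wait, this overcounts, so the precise claim to verify is that the non-pinnacle absolute-value set is $([2k+1] \setminus P) \setminus (\text{something})$; I would instead argue directly via the canonical witness structure. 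In the canonical witness, the non-pinnacle in position $2i$ is paired against the $i$-th smallest pinnacle, and the canonical construction places the smallest available non-pinnacle values first. Since all non-pinnacles are forced negative and must each be smaller than the adjacent pinnacle(s), the constraint ``$w(2i+1) < w(2i)$ and $w(2i+1) < w(2i+2)$'' together with minimality forces $P$ to be as small as possible among $[2k+1] \setminus -N$: if some value $p \in P$ could be swapped with a smaller value $p' \in ([2k+1]\setminus -N) \setminus P$ not already used, then $p'$ would have been chosen, contradicting either canonicality or the negativity-of-non-pinnacles conclusion of Lemma~\ref{lem:all non-pinnacles are always negative}.

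Making that minimality precise is the main obstacle. The cleanest route: suppose for contradiction that $P$ is \emph{not} the set of the smallest $k - |N|$ elements of $[2k+1] \setminus -N$. Then there exist $p \in P$ and $m \in ([2k+1] \setminus -N) \setminus P$ with $m < p$. Note $m \notin -N$ and $m \notin P$, so neither $m$ nor $-m$ is a pinnacle; since $S$ has $k$ elements and uses up $k$ of the $2k+1$ absolute values... I would show that $-m$ must then be a non-pinnacle of some witness (by an argument analogous to Lemma~\ref{lem:AnyIntegersAreAPS}/Corollary~\ref{cor:any all positive set is admissible in type b}, reallocating which values are used), and that replacing $p$ by $m$ in the canonical witness—i.e.\ building the canonical witness for $P' = (P \setminus \{p\}) \cup \{m\}$ together with $N$—still yields an admissible set; but then applying Lemma~\ref{lem:flipping last letter to get another witness} or re-running the parity argument shows $S$ (or a set forcing $S \in \APSD$) lands in type $D$, a contradiction. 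The ``in particular'' clause is then immediate: given $N$, the value $|N|$ and the set $-N$ are determined, hence $[2k+1] \setminus -N$ is determined, hence its smallest $k - |N|$ elements—namely $P$—are determined, so $S = P \sqcup N$ is determined by $N$ alone.
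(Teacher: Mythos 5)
Your overall strategy --- assume $P$ is not the set of smallest $k-|N|$ values of $[2k+1]\setminus -N$ and derive that $S$ would then lie in $\APSD_{2k+1}$ --- is the right one, and your use of Lemma~\ref{lem:APSBnotDSize} to get $|S|=k$ and of Lemma~\ref{lem:all non-pinnacles are always negative} matches the paper. But the decisive step is missing. Having produced $p\in P$ and $m\in([2k+1]\setminus -N)\setminus P$ with $m<p$, you propose to build the canonical witness for the \emph{different} set $P'=(P\setminus\{p\})\cup\{m\}$ together with $N$ and then ``re-run the parity argument.'' Admissibility of $P'\sqcup N$ says nothing about whether $S=P\sqcup N$ itself has a type~$D$ witness, so no contradiction about $S$ follows. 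You flag this yourself (``Making that minimality precise is the main obstacle'') and then leave the resolution as ``I would show that\ldots,'' which is exactly the part that needs an argument. Your appeal to ``canonicality'' is also misplaced: the canonical witness is canonical for a \emph{given} $S$; it does not choose which values belong to $P$, so canonicality alone cannot force $P$ to be minimal.

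The missing idea is to work with the canonical witness $w$ for $S$ itself and to locate a sign that can be flipped. Assuming $|N|<k$ (if $|N|=k$ then $P=\emptyset$ and there is nothing to prove), let $q$ be the \emph{minimum} of $([2k+1]\setminus -N)\setminus P$. The construction in Definition~\ref{defn:canonical witness} places $-q$, the largest non-pinnacle value, in the last position: $w(2k+1)=-q$, while $w(2k)=s_k=\max P$. Under the contradiction hypothesis $q<s_k$, so replacing $w(2k+1)$ by $+q$ still yields a witness for $S$ --- one with a positive non-pinnacle, contradicting Lemma~\ref{lem:all non-pinnacles are always negative} (equivalently, the two witnesses differ by a single sign, so one of them lies in $\Sym_{2k+1}^D$, contradicting $S\notin\APSD_{2k+1}$). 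The choice of the minimal $q$ matters: for a non-minimal $m$, the value $-m$ sits between two pinnacles in $w$, and flipping its sign need not preserve the pinnacle set. Finally, the counting detour in your first two paragraphs (where you note that $k+1$ non-pinnacle values cannot exhaust the $k+1+|N|$ elements of $[2k+1]\setminus P$) is a dead end you correctly abandon; it should simply be removed.
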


\begin{proof}
Fix $S\in \APSB_{2k+1}\setminus \APSD_{2k+1}$, with $P$ and $N$ as defined. By Lemma~\ref{lem:APSBnotDSize}, we have $|S|=k$, so let $ S= \{s_1 < s_2 < \cdots <s_k\} $.  If $|N| = k$, then there is nothing to check, so assume that $|N| < k$ and hence $s_k > 0$. 
Suppose, for the purpose of obtaining a contradiction, that there exists $q\in ([2k+1]\setminus-N)\setminus P$ with $q < s_k$. Let $q$ be minimal with these properties. Let $w$ be the canonical witness permutation for $S$. By definition, $w(2k) = s_k$ and $w(2k+1) = -q$. But then $w'$, which agrees with $w$ everywhere except $w'(2k+1) = q$, is also a witness for $S$, contradicting Lemma~\ref{lem:all non-pinnacles are always negative}. 
Therefore $P$ consists precisely of the smallest $k-|N|$ values in the set $[2k+1]\setminus -N$. 
\end{proof}

Lemma~\ref{lem:NegPinnaclesDetermine} gives a necessary condition for elements of $\APSB_{2k+1}\setminus \APSD_{2k+1}$. The next result establishes that the set $N \sqcup P$ constructed in Lemma~\ref{lem:NegPinnaclesDetermine} is, in fact, an admissible signed pinnacle set. 

\begin{cor}\label{cor:GoodNegPinnaclesAlwaysWork}
Suppose that $N \subset -\mathbb{N}$ and $N \in \APSB_{2k+1}$. Let $P$ be the smallest $k - |N|$ values in $[2k+1] \setminus -N$. Then $N \sqcup P \in \APSB_{2k+1}$. 
\end{cor}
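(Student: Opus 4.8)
The plan is to exhibit an explicit witness permutation in $\Sym_{2k+1}^B$ for the set $S \coloneqq N \sqcup P$, using the candidate construction forced by Lemma~\ref{lem:NegPinnaclesDetermine}, and then verify directly that its pinnacle set is exactly $S$. Write $N = \{n_1 < \cdots < n_r\}$ with $r = |N| \le k$, and let $P = \{p_1 < \cdots < p_{k-r}\}$ be the smallest $k-r$ values of $[2k+1]\setminus -N$. Since $N \in \APSB_{2k+1}$, Theorem~\ref{thm:characterizing aps in type b} (applied with empty positive part) tells us that $N \in \APS(-([2k+1]\setminus\emptyset)) = \APS(-[2k+1])$, so there is a canonical witness $w$ for $N$ on the ground set $-[2k+1]$, of the shape
$$w = i_1 \ n_1 \ i_2 \ n_2 \ \cdots \ i_r \ n_r \ i_{r+1} \ i_{r+2} \ \cdots \ i_{2k+1-r},$$
where the $i_j$ are the elements of $-[2k+1]\setminus N$ listed in increasing order. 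The construction in the proof of Theorem~\ref{thm:characterizing aps in type b} then gives a witness for $S$ in $\Sym_{2k+1}^B$ by interleaving the positive pinnacles $p_1,\ldots,p_{k-r}$ into the tail of non-pinnacles:
$$v = i_1 \ n_1 \ \cdots \ i_r \ n_r \ i_{r+1} \ p_1 \ i_{r+2} \ p_2 \ \cdots \ i_{r+k-r} \ p_{k-r} \ i_{r+k-r+1} \ \cdots \ i_{2k+1-r}.$$
So in fact $S \in \APSB_{2k+1}$ follows almost immediately once we check the sizes line up: $|S| = r + (k-r) = k \le k = \lfloor (2k+1-1)/2\rfloor$, and the three structural conditions of Theorem~\ref{thm:characterizing aps in type b} are met by construction ($P \subset [2k+1]$, $N \subset -([2k+1]\setminus P)$ since $-N$ is disjoint from $P$ by the definition of $P$, and $N \in \APS(-([2k+1]\setminus P))$ because $N \in \APS(-[2k+1])$ and removing the positive indices $P$ from the ambient ground set does not remove any negative element, so by Lemma~\ref{lem:aps of generic set} admissibility is preserved).

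The one point requiring a little care — and the step I expect to be the main obstacle — is that last claim: that $N$ remains an admissible pinnacle set of the \emph{smaller} ground set $-([2k+1]\setminus P)$, not just of $-[2k+1]$. The resolution is that $-([2k+1]\setminus P)$ is a totally ordered set containing $N$, with $|{-([2k+1]\setminus P)}| = 2k+1 - (k-r) = k+1+r \ge 2r+1$ elements (using $r \le k$), which is exactly the hypothesis $|N| \le (|X|-1)/2$ needed for $N$ to be admissible in a generic ordered set of that size; combined with Lemma~\ref{lem:aps of generic set} and Corollary~\ref{cor:any all positive set is admissible in type b}-style reasoning (or directly, Lemma~\ref{lem:positive aps in bijective with negative aps} and the fact that $N \in \APS(-[2k+1])$ with $|N| \le k$ forces $N \in \APS$ of any negative interval of size at least $2|N|+1$), we conclude $N \in \APS(-([2k+1]\setminus P))$. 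Then Theorem~\ref{thm:characterizing aps in type b} applies verbatim and delivers $N \sqcup P = S \in \APSB_{2k+1}$.

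Alternatively — and this is cleaner — I would simply invoke Theorem~\ref{thm:characterizing aps in type b} directly: it suffices to verify its four bullet points for $S = P \sqcup N$ with this $P$ and $N$. The first bullet is the cardinality count $|S| = k = \lfloor (2k+1-1)/2 \rfloor$; the second and third are immediate from how $P$ was defined (as a subset of $[2k+1]$ disjoint in absolute value from $N$); and the fourth, $N \in \APS(-([2k+1]\setminus P))$, reduces via Lemma~\ref{lem:aps of generic set} to $|N| \le \lfloor(|[2k+1]\setminus P| - 1)/2\rfloor$, i.e.\ $r \le \lfloor (k+r)/2 \rfloor$, which holds precisely because $r \le k$. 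Thus the whole statement is a corollary of the characterization theorem together with an elementary inequality, and I would present it in that compact form, with the explicit witness $v$ above included as an illustration.
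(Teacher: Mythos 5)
Your overall strategy --- verify the four bullet points of Theorem~\ref{thm:characterizing aps in type b} --- is exactly what the paper does (its entire proof is ``This follows from Theorem~\ref{thm:characterizing aps in type b}''), and you handle the first three bullets correctly. The gap is in the fourth bullet. You claim that $N \in \APS(-([2k+1]\setminus P))$ reduces, via Lemma~\ref{lem:aps of generic set}, to the cardinality inequality $|N| \le \lfloor(|X|-1)/2\rfloor$ for $X = -([2k+1]\setminus P)$. That reduction is false: Lemma~\ref{lem:aps of generic set} only says that the \emph{number} of admissible pinnacle sets of $X$ depends on $|X|$ alone; whether a \emph{particular} subset is admissible depends on the ranks of its elements within $X$. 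For instance, $\{-3\}$ is not admissible in $\{-3,-2,-1\}$ even though $1 \le (3-1)/2$. And the issue is not hypothetical here: passing from $-[2k+1]$ to $X$ removes the elements $-p$ for $p\in P$ (these are \emph{negative} elements, contrary to your parenthetical remark), and some of them can lie below elements of $N$, lowering their ranks. With $k=2$ and $N=\{-3\}$, taking $P=\{5\}$ instead of the prescribed $P=\{1\}$ gives $X=\{-4,-3,-2,-1\}$, in which $-3$ has rank $2$ and $N$ is \emph{not} admissible. So the fourth bullet genuinely uses that $P$ consists of the \emph{smallest} values of $[2k+1]\setminus -N$, and a short argument is required: writing $N=\{n_1<\cdots<n_r\}$ and $m_i=|n_i|$, the rank of $n_i$ in $X$ is $2k+2-m_i$ if no element of $-P$ lies below $n_i$ (then admissibility of $N$ in $-[2k+1]$ gives rank $>2i$), and is exactly $k+1+i>2i$ otherwise. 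Your proposal never supplies this step.

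A secondary problem: the explicit witness $v$ you write down is not an element of $\Sym_{2k+1}^B$. You take the letters $i_1,\ldots,i_{2k+1-r}$ to be all of $-[2k+1]\setminus N$ and then additionally insert $p_1,\ldots,p_{k-r}$, so $v$ has $3k+1-r$ letters and contains both $p_j$ and $-p_j$. In the construction inside the proof of Theorem~\ref{thm:characterizing aps in type b}, the non-pinnacle letters are drawn from $-([2k+1]\setminus P)$, i.e.\ the values $-p_j$ are excluded from the start --- which is precisely why admissibility of $N$ must be verified on that smaller ground set rather than on $-[2k+1]$.
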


\begin{proof}
This follows from Theorem~\ref{thm:characterizing aps in type b}.
\end{proof}

Maintaining the terminology of Corollary~\ref{cor:GoodNegPinnaclesAlwaysWork}, note that for any set $N \subset -\mathbb{N}$, all witness permutations for $N \sqcup P$ are forced by construction of $P$ to have the same number of negative values: $k+1 + |N|$. This yields the following corollary. 

\begin{cor}\label{cor:S and N sizes have same parity}
Suppose $S\in\APSB_{2k+1}\setminus \APSD_{2k+1}$, with $N \coloneqq S \cap -\mathbb{N}$. The sets $|N|$ and $|S|$ have the same parity.
\end{cor}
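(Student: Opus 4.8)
The plan is to read off the parity of $|S|$ directly from the structure already established for elements of $\APSB_{2k+1}\setminus \APSD_{2k+1}$. By Lemma~\ref{lem:APSBnotDSize}, such an $S$ has $|S| = k$, and by Lemma~\ref{lem:all non-pinnacles are always negative} every witness permutation $w \in \Sym_{2k+1}^B$ for $S$ has all $k+1$ of its non-pinnacle values (namely $w(1), w(3), \ldots, w(2k+1)$) lying in $-\mathbb{N}$. So the only way for $w$ to have a positive value at all is through its pinnacles, and the positive pinnacles of $w$ are exactly the elements of $P \coloneqq S \cap \mathbb{N}$, of which there are $k - |N|$.

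First I would count the number of negative entries in $w$. The negative entries are: all $k+1$ non-pinnacle positions, plus the negative pinnacles, of which there are $|N|$. Hence $w$ has exactly $k + 1 + |N|$ negative entries. Since $S \notin \APSD_{2k+1}$, we must have $w \notin \Sym_{2k+1}^D$, i.e.\ $k + 1 + |N|$ is odd. Equivalently, $k + |N|$ is even, so $k \equiv |N| \pmod 2$. Combining this with $|S| = k$ from Lemma~\ref{lem:APSBnotDSize} gives $|S| \equiv |N| \pmod 2$, which is the claim.

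The only point that requires a word of care — and the step I expect to be the mild obstacle — is justifying that \emph{every} witness for $S$ has this same negative-entry count, rather than just one particular witness; but this is exactly what Lemma~\ref{lem:all non-pinnacles are always negative} buys us (all non-pinnacles negative in \emph{every} witness), together with the observation that the sign of a pinnacle value is determined by the set $S$ itself. (This is also foreshadowed in the remark following Corollary~\ref{cor:GoodNegPinnaclesAlwaysWork}, that all witnesses for $N \sqcup P$ have $k+1+|N|$ negative values.) So there is no real calculation to grind through here: the corollary is essentially a bookkeeping consequence of the two preceding lemmas plus the definition of $\Sym_{2k+1}^D$, and the proof should be only a few lines long.
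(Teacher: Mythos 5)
Your proof is correct and follows essentially the same route as the paper: both establish $|S|=k$ via Lemma~\ref{lem:APSBnotDSize}, count $k+1+|N|$ negative entries in any witness (the paper cites the remark after Corollary~\ref{cor:GoodNegPinnaclesAlwaysWork}, you derive it directly from Lemma~\ref{lem:all non-pinnacles are always negative}), and conclude that this count must be odd since $S\notin\APSD_{2k+1}$. No gaps; your extra care about this holding for \emph{every} witness is exactly the point the paper's preceding remark is there to supply.
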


\begin{proof}
To have $S\in\APSB_{2k+1}\setminus \APSD_{2k+1}$, we need $|S| = k$, by Lemma \ref{lem:APSBnotDSize}. Moreover, as discussed above, the number of negative values is $k+1+|N|$, and this must be odd because $S\notin \APS_{2k+1}^D$. Thus $k + |N|=|S|+|N|$ is even, completing the proof. 
\end{proof}

The consequence of this collection of results is that if we have a set $N \subset -\mathbb{N}$ that is, itself, admissible in $\Sym_{2k+1}^B$, and for which $|N|$ has the same parity as $k$, then there is a unique ($(k-|N|)$-element) set $P \subset \mathbb{N}$ for which
$$N \sqcup P \in \APSB_{2k+1}\setminus \APSD_{2k+1}.$$
Therefore, to enumerate $\APSB_{2k + 1} \setminus \APSD_{2k + 1}$, it suffices to count the elements of $\APSB_{2k+1}$ that have no positive values and that have size of the form $k-2i$. 

Because we want to look at the elements of $\APSB_{2k+1}$ having no positive values, we can take advantage of Lemma~\ref{lem:positive aps in bijective with negative aps} to look, instead, at $\APS_{2k+1}$. That is, it will suffice to count
$$\sum_{i \ge 0}  \bigg|\{S \in \APS_{2k+1} : |S| = k-2i\}\bigg|.$$

The last step of this enumeration requires a parity result.

\begin{lem}\label{L:oddeven}
For $k\geq 0$,
$\bigg| \{S \in \APS_{2k+1} : |S| \text{ is even}\}\bigg| =  \bigg|\{S \in \APS_{2k+1} : |S| \text{ is odd}\}\bigg|$. 
\end{lem}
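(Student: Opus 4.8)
The plan is to produce a cardinality‑parity‑reversing involution on $\APS_{2k+1}$; such a map pairs each even‑sized admissible pinnacle set with an odd‑sized one and conversely, which gives the claimed equality immediately. The involution I have in mind is the ``toggle'' of the largest possible value: given $S \in \APS_{2k+1}$, return $S \cup \{2k+1\}$ if $2k+1 \notin S$, and return $S \setminus \{2k+1\}$ if $2k+1 \in S$. This changes $|S|$ by exactly $1$, so once it is shown to be well defined it has no fixed points and reverses the parity of $|S|$, and we are done.

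The content is therefore checking that this map sends $\APS_{2k+1}$ into itself, and for that I would recycle the two surgeries appearing in the proof of Proposition~\ref{prop:size of pfrk}. If $2k+1 \in S \in \APS_{2k+1}$, then deleting the letter $2k+1$ from the canonical witness of $S$ yields a permutation of $[2k]$ with pinnacle set $S \setminus \{2k+1\}$, so $S \setminus \{2k+1\} \in \APS_{2k} \subseteq \APS_{2k+1}$. Conversely, if $2k+1 \notin S \in \APS_{2k+1}$, then the canonical witness has $w(2k+1) = 2k+1$, so its restriction to the first $2k$ letters witnesses $S \in \APS_{2k}$; since $|S| \le \lfloor(2k-1)/2\rfloor = k-1$, that canonical witness in $\Sym_{2k}$ ends with at least two non‑pinnacle letters, and inserting $2k+1$ between the final two of them produces a permutation of $[2k+1]$ in which $2k+1$ is a new pinnacle and every other pinnacle is unchanged, so $S \cup \{2k+1\} \in \APS_{2k+1}$. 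These two operations are mutually inverse, so the toggle map is the desired involution on $\APS_{2k+1}$.

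The step I would be most careful about is exactly this last bookkeeping: verifying that inserting $2k+1$ between the final two non‑pinnacles creates exactly one new pinnacle (immediate, since $2k+1$ is the global maximum placed at an internal index between two increasing values) and that deleting $2k+1$ from the canonical witness disturbs none of the surviving ascent/descent patterns, hence none of the remaining pinnacles. Neither is difficult, but both rely on the explicit shape of the canonical witness. As an alternative route that avoids the involution, one can argue purely by arithmetic: Proposition~\ref{prop:size of pfrk} gives $\binom{2k}{d} - \binom{2k}{d-1}$ for the number of sets in $\APS_{2k+1}$ of size $d$, so the difference between the even‑sized and odd‑sized counts is $\sum_{d=0}^{k}(-1)^d\big(\binom{2k}{d} - \binom{2k}{d-1}\big)$, which telescopes to $2\sum_{d=0}^{k-1}(-1)^d\binom{2k}{d} + (-1)^k\binom{2k}{k}$; this vanishes because the symmetry $\binom{2k}{d} = \binom{2k}{2k-d}$ inside $\sum_{d=0}^{2k}(-1)^d\binom{2k}{d} = 0$ forces $2\sum_{d=0}^{k-1}(-1)^d\binom{2k}{d} = -(-1)^k\binom{2k}{k}$.
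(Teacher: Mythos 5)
Your main argument is correct and is essentially the paper's proof: both establish the parity-reversing involution on $\APS_{2k+1}$ that toggles the element $2k+1$, checking that removal always preserves admissibility and that insertion is possible because $2k+1\notin S$ forces $|S|\le k-1$, leaving room to place $2k+1$ as a new pinnacle (the paper inserts it after the largest vale, you insert it between the last two non-pinnacles of the canonical witness---an immaterial difference). Your alternative arithmetic derivation from Proposition~\ref{prop:size of pfrk} is also valid, but the involution is the route the paper takes.
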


\begin{proof}
Fix $S\subset [2k+1]$. If $2k+1\in S$, then set $S'\coloneqq S\setminus\{2k+1\}.$ Clearly if $S\in\APS_{2k+1}$ then also $S'\in\APS_{2k+1}$, and the sets $|S|$ and $|S'|$ have different parities.

Now consider $S \in \APS_{2k+1}$ with $2k+1\not\in S$. By \cite[Theorem~1.8]{PinnaclesTypeA}, $\max(S)>2|S|$. We have $\max(S)<2k+1$, so $|S|<k$. Consequently, the canonical witness permutation $w$ of $S$ uses at most $k$ vales, so there are at least $(2k+1)-(k-1+k)=2$ non-pinnacle/non-vale values in $w$ and one of these is $2k+1$. We can create a new permutation $w'$ by inserting $2k+1$ immediately to the right of the largest vale in $w$. Thus the pinnacle set of $w'$ is $S\cup\{2k+1\}$.

Therefore there is a bijection between even-sized elements of $\APS_{2k+1}$ and odd-sized ones, obtained by adding/removing the element $2k+1$. This partitions $\APS_{2k+1}$ into two evenly sized parts. 
\end{proof}

We have now completed all of the steps necessary to give the desired enumeration.

\begin{thm}\label{T:BnotD}
For $k\geq 1$, $\displaystyle{\left\vert \APSB_{2k + 1} \setminus \APSD_{2k + 1} \right\vert = \binom{2k-1}{k}}.$
\end{thm}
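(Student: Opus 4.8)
The plan is to combine the reduction established in the discussion preceding the statement with the parity symmetry of Lemma~\ref{L:oddeven} and the enumeration in Proposition~\ref{prop:size of pfrk}. That discussion already reduces the problem to the identity
$$\left\vert\APSB_{2k+1}\setminus\APSD_{2k+1}\right\vert \;=\; \sum_{i\ge 0}\bigg\vert\{S\in\APS_{2k+1}:|S|=k-2i\}\bigg\vert,$$
so the remaining task is a purely type $A$ count.

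First I would observe that the integers $k-2i$ with $i\ge 0$ are exactly the non-negative integers that are at most $k$ and congruent to $k$ modulo $2$. Since every $S\in\APS_{2k+1}$ satisfies $|S|\le k$ (a permutation of $2k+1$ letters has at most $k$ peaks, hence at most $k$ pinnacles; see also \cite[Theorem~1.8]{PinnaclesTypeA}), the constraint $|S|\le k$ is automatic. Hence the sum above counts precisely the elements of $\APS_{2k+1}$ whose cardinality has the same parity as $k$ — the even-sized ones when $k$ is even, and the odd-sized ones when $k$ is odd.

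Next I would apply Lemma~\ref{L:oddeven}, which partitions $\APS_{2k+1}$ into two equinumerous classes according to the parity of cardinality; in either case the displayed sum equals $\tfrac12\left\vert\APS_{2k+1}\right\vert$. Finally, since the admissible pinnacle sets of $\Sym_{2k+1}$ are exactly those of size at most $k=\lfloor((2k+1)-1)/2\rfloor$, Proposition~\ref{prop:size of pfrk} gives $\left\vert\APS_{2k+1}\right\vert=\pfrk_{2k+1}(k)=\binom{2k}{k}$, whence
$$\left\vert\APSB_{2k+1}\setminus\APSD_{2k+1}\right\vert \;=\; \frac12\binom{2k}{k} \;=\; \binom{2k-1}{k},$$
the last equality being $\binom{2k}{k}=\binom{2k-1}{k-1}+\binom{2k-1}{k}=2\binom{2k-1}{k}$.

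No substantial obstacle remains, as all the real work is packaged in the earlier lemmas. The two points that need a little care are: (i) checking that the sum over $i$ sweeps out one entire parity class of $\APS_{2k+1}$, which is exactly where the bound $|S|\le k$ for every admissible $S$ is used; and (ii) phrasing the step through Lemma~\ref{L:oddeven} so that it handles both parities of $k$ in one stroke.
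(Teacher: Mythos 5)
Your proof is correct and follows essentially the same route as the paper's: reduce to counting the sets in $\APS_{2k+1}$ whose size is congruent to $k$ modulo $2$, invoke Lemma~\ref{L:oddeven} to see this is half of $\left\vert\APS_{2k+1}\right\vert$, and finish with the binomial identity. The only (harmless) difference is that you derive $\left\vert\APS_{2k+1}\right\vert=\binom{2k}{k}$ internally from Proposition~\ref{prop:size of pfrk} rather than citing the external result, which if anything makes the argument more self-contained.
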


\begin{proof}
Following Lemmas~\ref{lem:APSBnotDSize} and~\ref{lem:NegPinnaclesDetermine} and Corollary~\ref{cor:S and N sizes have same parity}, we can enumerate $\APSB_{2k+1}\setminus \APSD_{2k+1}$ by counting elements of $\APS_{2k+1}$ that have size $\{k-2i : i = 0, 1, \ldots\}$. These are either all of the odd-sized sets in $\APS_{2k+1}$ or all of the even-sized ones. By Lemma~\ref{L:oddeven}, then,
$$\left\vert \APSB_{2k+1}\setminus \APSD_{2k+1}\right\vert = \frac{1}{2} \left\vert \APS_{2k+1}\right\vert.$$
It was shown in \cite[Theorem~1.8]{PinnaclesTypeA} that $|\APS_{2k+1}|=\binom{2k}{k}$. Finally, it is straightforward to check that $\frac{1}{2}\binom{2k}{k}=\binom{2k-1}{k}$. 
\end{proof}

We can now use Theorem~\ref{T:countB}, which enumerated $\APSB_n$, and Theorems~\ref{T:2kBD} and~\ref{T:BnotD} to enumerate $\APSD_n$ for all $n$. 

\begin{cor}\label{C:D}
For $k\geq1$, $\left\vert\APSD_{2k}\right\vert=\left\vert\APSB_{2k}\right\vert$ and 
\begin{align*}
\left\vert\APSD_{2k+1}\right\vert &= \left(\sum_{i=0}^k \binom{2k+1}{i} \binom{2k-i}{k-i}\right) - \binom{2k-1}{k}.
\end{align*}
\end{cor}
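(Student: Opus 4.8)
The plan is to deduce everything from the three results that precede it in Section~\ref{sec: relating A, B, and D}, so this will be short. For the first assertion, I would simply invoke Theorem~\ref{T:2kBD}: since $\APSB_{2k}=\APSD_{2k}$ as sets, certainly $|\APSD_{2k}|=|\APSB_{2k}|$, and the value of the latter is recorded by Theorem~\ref{T:countB} with $n=2k$ (so $\lfloor(n-1)/2\rfloor=k-1$).

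For the odd case, the first step is to observe that $\APSD_{2k+1}\subseteq\APSB_{2k+1}$ always holds (a witness in $\Sym_{2k+1}^D\subset\Sym_{2k+1}^B$ witnesses admissibility in type $B$), so we have a disjoint decomposition
$$\APSB_{2k+1}=\APSD_{2k+1}\ \sqcup\ \left(\APSB_{2k+1}\setminus\APSD_{2k+1}\right),$$
which gives $\left|\APSD_{2k+1}\right|=\left|\APSB_{2k+1}\right|-\left|\APSB_{2k+1}\setminus\APSD_{2k+1}\right|$. Next I would specialize Theorem~\ref{T:countB} to $n=2k+1$: here $\lfloor(n-1)/2\rfloor=\lfloor 2k/2\rfloor=k$ and $n-1-k=2k-k=k$, wait---$n-1-k=(2k+1)-1-k=k$, and the upper binomial argument is $n-1-k=2k-i$ when the summation index is $i$; carefully, reindexing the sum in Theorem~\ref{T:countB} by $i$ gives $\left|\APSB_{2k+1}\right|=\sum_{i=0}^{k}\binom{2k+1}{i}\binom{2k-i}{k-i}$. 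Then I would plug in Theorem~\ref{T:BnotD}, namely $\left|\APSB_{2k+1}\setminus\APSD_{2k+1}\right|=\binom{2k-1}{k}$, and subtract to obtain the stated formula.

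There is essentially no obstacle here: the only thing requiring care is the index/floor bookkeeping when substituting $n=2k+1$ into the general sum of Theorem~\ref{T:countB}, making sure that $\binom{n-1-k}{\lfloor(n-1)/2\rfloor-k}$ becomes $\binom{2k-i}{k-i}$ after renaming the summation variable. Everything else is a one-line consequence of the disjoint union and the two cited enumerations.
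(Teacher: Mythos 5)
Your proposal is correct and matches the paper's (implicit) argument exactly: the paper derives Corollary~\ref{C:D} by combining Theorem~\ref{T:2kBD} for the even case with the disjoint decomposition $\APSB_{2k+1}=\APSD_{2k+1}\sqcup(\APSB_{2k+1}\setminus\APSD_{2k+1})$ and the enumerations of Theorems~\ref{T:countB} and~\ref{T:BnotD} for the odd case. Your index bookkeeping when substituting $n=2k+1$ into Theorem~\ref{T:countB} is also correct, so there is nothing to add.
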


In Table~\ref{table:counting aps in type d}, we give the number of signed admissible pinnacle sets in type $D$ for $3 \le n\leq 15$, while permutations in $\Sym_1^D$ and $\Sym_2^D$ have no pinnacles. This appears in the OEIS as sequence \href{http://oeis.org/A359067}{A359067}. The even-indexed terms are identical to even terms in Table~\ref{table:counting aps in type b} and the odd-indexed terms are $\binom{2k-1}{k}$ less than the corresponding odd-indexed terms in Table~\ref{table:counting aps in type b}.
\begin{table}[htbp]
{\renewcommand{\arraystretch}{2}$\begin{array}{c||c|c|c|c|c|c|c|c|c|c|c|c|c}
n & 3 & 4 & 5 & 6 & 7 & 8 & 9 & 10 & 11 & 12 & 13 & 14 & 15\\
\hline
\hline
\left\vert \APSD_n \right\vert & 4  & 7 & 28 & 49 & 199 & 351 & 1436 & 2561 & 10499 & 18943 & 77617 & 141569 & 579149
\end{array}$}
\vspace{.1in}
\caption{The number of admissible pinnacle sets in $\Sym_n^D$, for $3 \le n \le 15$.}\label{table:counting aps in type d}
\end{table}

\subsection{Comparing admissible pinnacle sets in types \texorpdfstring{$B$ and $A$}{B and A}} 
Some elements of $\APSB_n$ have no negative values, and so one could ask if those sets might also be admissible in $\Sym_n$. In this section we consider how those elements of $\APSB_n$ are related to the admissible pinnacle sets in $\APS_n$. To make this discussion precise, we introduce:
$$\APSPLUS_n \coloneqq \{ S \in \APSB_n : S \subset \mathbb{N}\};$$
in other word, $\APSPLUS_n$ consists of the pinnacle sets that are admissible in $\Sym_n^B$ and that contain no negative values.

For example, $\{1,3\} \in \APSPLUS_5$, with canonical witness $\overline{5}1\overline{4}3\overline{2} \in \Sym_5^B$. In fact, by Corollary~\ref{cor:any all positive set is admissible in type b}, any subset of $[n]$ having at most $(n-1)/2$ elements is admissible in $\Sym_n^B$. Contrast this with $\APS_n$; for example,
$$\APSPLUS_5 \setminus \APS_5 = \Big\{\{1\}, \{2\}, \{1,2\}, \{1,3\}, \{1,4\}, \{1,5\}, \{2,3\}, \{2,4\}, \{2,5\}, \{3,4\}\Big\}.$$

Our goal in this section is to understand $\APSPLUS_n \setminus \APS_n$. As with the comparison of $\APSB_n$ and $\APSD_n$, this will depend on the parity of $n$.

\begin{thm}\label{T:plusnotA1}
For $k\geq 0$, $|\APSPLUS_{2k+1} \setminus \APS_{2k+1}|=4^k - \binom{2k}{k}$.
\end{thm}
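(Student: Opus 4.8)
The plan is to count $\APSPLUS_{2k+1}$ directly and then subtract $|\APS_{2k+1}|$, using the fact from \cite[Theorem~1.8]{PinnaclesTypeA} that $|\APS_{2k+1}| = \binom{2k}{k}$. By Corollary~\ref{cor:any all positive set is admissible in type b} (or directly from Theorem~\ref{thm:characterizing aps in type b} with $N(S) = \emptyset$), a set $S \subset [2k+1]$ lies in $\APSPLUS_{2k+1}$ if and only if $|S| \le \lfloor (2k)/2\rfloor = k$. So the first step is the identity
$$|\APSPLUS_{2k+1}| = \sum_{j=0}^{k} \binom{2k+1}{j}.$$
The second step is to evaluate this partial sum of binomial coefficients. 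Since $\sum_{j=0}^{2k+1}\binom{2k+1}{j} = 2^{2k+1}$ and the row of Pascal's triangle indexed by the odd number $2k+1$ is symmetric with no central term, the lower half sums to exactly half the total: $\sum_{j=0}^{k}\binom{2k+1}{j} = \tfrac12 \cdot 2^{2k+1} = 2^{2k} = 4^k$. Subtracting $|\APS_{2k+1}| = \binom{2k}{k}$ (noting $\APS_{2k+1} \subset \APSPLUS_{2k+1}$, since every type $A$ pinnacle set is all-positive and automatically satisfies the size bound) gives $|\APSPLUS_{2k+1}\setminus\APS_{2k+1}| = 4^k - \binom{2k}{k}$, as claimed.

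The one point requiring a little care — and the only real obstacle — is the containment $\APS_{2k+1} \subseteq \APSPLUS_{2k+1}$, which makes the subtraction legitimate. This holds because any $S \in \APS_{2k+1}$ has a witness $w \in \Sym_{2k+1}$, which is in particular an element of $\Sym_{2k+1}^B$, so $S \in \APSB_{2k+1}$; and $S \subset [2k+1] \subset \mathbb N$, so $S \in \APSPLUS_{2k+1}$. (Equivalently, every such $S$ satisfies $|S| \le k$ by the type $A$ bound $\max(S) > 2|S|$ together with $\max(S) \le 2k+1$.) Everything else is the elementary binomial computation above, so the proof is short.

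For a cleaner writeup I would state the two displays as a single chain: $|\APSPLUS_{2k+1}| = \sum_{j=0}^{k}\binom{2k+1}{j} = 4^k$, then conclude $|\APSPLUS_{2k+1}\setminus\APS_{2k+1}| = 4^k - \binom{2k}{k}$, citing Corollary~\ref{cor:any all positive set is admissible in type b} for the characterization of $\APSPLUS_{2k+1}$ and \cite[Theorem~1.8]{PinnaclesTypeA} for $|\APS_{2k+1}| = \binom{2k}{k}$.
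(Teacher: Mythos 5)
Your proposal is correct and follows essentially the same route as the paper's own proof: both establish $\APS_{2k+1} \subseteq \APSPLUS_{2k+1}$, compute $\left\vert\APSPLUS_{2k+1}\right\vert = \sum_{j=0}^{k}\binom{2k+1}{j} = 4^k$ via Corollary~\ref{cor:any all positive set is admissible in type b} and the symmetry of the odd-indexed row of Pascal's triangle, and then subtract $\left\vert\APS_{2k+1}\right\vert = \binom{2k}{k}$ from \cite[Theorem~1.8]{PinnaclesTypeA}. Your extra care in justifying the containment and the ``only if'' direction of the size bound is a welcome bit of added rigor, but the argument is the same.
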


\begin{proof}
Because $\APS_{2k+1} \subset \APSPLUS_{2k+1}$, the desired value is equal to
$$\left\vert\APSPLUS_{2k+1}\right\vert - \left\vert\APS_{2k+1}\right\vert.$$
Following Corollary~\ref{cor:any all positive set is admissible in type b}, we can compute $\left\vert\APSPLUS_{2k+1}\right\vert$ by counting $i$-element subsets of $[2k+1]$ for all $i\le k$. The result follows by recognizing that this yields a sum that is half of a row-sum of Pascal's triangle, and combining this with the enumeration of $\APS_{2k+1}$ from \cite{PinnaclesTypeA}:
\begin{align*}
|\APSPLUS_{2k+1} \setminus \APS_{2k+1}| &= \left\vert\APSPLUS_{2k+1}\right\vert - \left\vert\APS_{2k+1}\right\vert\\
&= \binom{2k+1}{0}+ \binom{2k+1}{1}+ \cdots+ \binom{2k+1}{k} - \binom{2k}{k}\\
&= \frac{1}{2} 2^{2k+1} - \binom{2k}{k}\\
&= 4^k - \binom{2k}{k}.\qedhere
\end{align*}
\end{proof}

We now complete this analysis by considering the even-indexed case.

\begin{thm}\label{T:plusnotA2}
For $k\geq 1$, $\left\vert\APSPLUS_{2k}\setminus \APS_{2k}\right\vert =2^{2k-1} - \binom{2k}{k}.$ 
\end{thm}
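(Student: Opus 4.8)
The plan is to imitate the proof of Theorem~\ref{T:plusnotA1}, taking advantage of the containment $\APS_{2k} \subseteq \APSPLUS_{2k}$, so that the quantity we want equals $|\APSPLUS_{2k}| - |\APS_{2k}|$, and then to evaluate each of these two cardinalities separately by elementary means.

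First I would pin down $\APSPLUS_{2k}$ exactly. By definition every $S \in \APSPLUS_{2k}$ is a subset of $[2k]$, and by Theorem~\ref{thm:characterizing aps in type b} (applied with $N = \emptyset$) it has size at most $(2k-1)/2$, hence at most $k-1$; conversely, Corollary~\ref{cor:any all positive set is admissible in type b} guarantees that every subset of $[2k]$ of size at most $k-1$ lies in $\APSPLUS_{2k}$. Thus $\APSPLUS_{2k}$ is precisely the family of subsets of $[2k]$ of size at most $k-1$, so $|\APSPLUS_{2k}| = \sum_{i=0}^{k-1}\binom{2k}{i}$. Invoking the symmetry $\binom{2k}{i} = \binom{2k}{2k-i}$, this partial row-sum is exactly half of $2^{2k} - \binom{2k}{k}$, giving $|\APSPLUS_{2k}| = 2^{2k-1} - \tfrac12\binom{2k}{k}$.

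Next I would compute $|\APS_{2k}|$. Since a permutation in $\Sym_{2k}$ has at most $\lfloor (2k-1)/2 \rfloor = k-1$ peaks, every set in $\APS_{2k}$ has cardinality at most $k-1$, so $|\APS_{2k}| = \pfrk_{2k}(k-1)$. Proposition~\ref{prop:size of pfrk} then yields $\pfrk_{2k}(k-1) = \binom{2k-1}{k-1}$, and the Pascal identity $\binom{2k}{k} = \binom{2k-1}{k-1} + \binom{2k-1}{k} = 2\binom{2k-1}{k-1}$ shows $|\APS_{2k}| = \tfrac12\binom{2k}{k}$. Subtracting, $|\APSPLUS_{2k} \setminus \APS_{2k}| = \bigl(2^{2k-1} - \tfrac12\binom{2k}{k}\bigr) - \tfrac12\binom{2k}{k} = 2^{2k-1} - \binom{2k}{k}$.

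I do not anticipate a genuine obstacle: the result is a short computation once the two counts are identified. The one place that deserves a little care — the even-index analogue of the step in Theorem~\ref{T:plusnotA1} where the value $|\APS_{2k+1}| = \binom{2k}{k}$ was quoted from \cite{PinnaclesTypeA} — is the evaluation $|\APS_{2k}| = \binom{2k-1}{k-1}$, which is not stated verbatim in that form; I would derive it from Proposition~\ref{prop:size of pfrk} after noting that $k-1$ is the largest possible size of an admissible pinnacle set in $\Sym_{2k}$, so that $\pfrk_{2k}(k-1)$ already counts all of $\APS_{2k}$. It is also worth double-checking the parity bookkeeping in $|\APSPLUS_{2k}| = \sum_{i=0}^{k-1}\binom{2k}{i}$: because $(2k-1)/2$ is not an integer, the middle term $\binom{2k}{k}$ is correctly excluded, which is exactly the reason the two halves of the full row sum differ by $\binom{2k}{k}$ rather than being equal.
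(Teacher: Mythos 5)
Your proof is correct and follows essentially the same route as the paper: write the count as $|\APSPLUS_{2k}| - |\APS_{2k}|$, identify $\APSPLUS_{2k}$ with the subsets of $[2k]$ of size at most $k-1$ via Corollary~\ref{cor:any all positive set is admissible in type b}, and carry out the same binomial arithmetic. The only (harmless, arguably preferable) difference is that you derive $|\APS_{2k}| = \binom{2k-1}{k-1}$ internally from Proposition~\ref{prop:size of pfrk} rather than quoting the enumeration from \cite{PinnaclesTypeA}.
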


\begin{proof}
This calculation is almost identical to that from the proof of Theorem~\ref{T:plusnotA1}, except that we will also have to subtract the central term from a row of Pascal's triangle:
\begin{align*}
|\APSPLUS_{2k} \setminus \APS_{2k}| &= \left\vert\APSPLUS_{2k}\right\vert - \left\vert\APS_{2k}\right\vert\\
&= \binom{2k}{0}+ \binom{2k}{1}+ \cdots+ \binom{2k}{k-1} - \binom{2k-1}{k-1}\\
&= \frac{1}{2} \left(2^{2k} - \binom{2k}{k} \right) - \binom{2k-1}{k-1}\\
&= 2^{2k-1} - \left(\frac{1}{2}\binom{2k}{k} + \binom{2k-1}{k-1}\right)\\
&= 2^{2k-1} - \binom{2k}{k}.\qedhere
\end{align*}
\end{proof}

We combine the enumerations of Theorems \ref{T:plusnotA1} and \ref{T:plusnotA2} in Table~\ref{table:counting aps+ not in type a}.  Specifically, we list $\left\vert\APSPLUS_{n}\setminus \APS_{n}\right\vert$ for $3 \leq n\leq 15$, while permutations in $\Sym_1^B$ and $\Sym_2^B$ have no pinnacles. The $n$th term of this appears in the OEIS as double the $(n-1)$st term of \cite[\href{http://oeis.org/A294175}{A294175}]{OEIS}. Moreover, the odd-indexed terms, enumerated in Theorem~\ref{T:plusnotA1}, appear in \cite[\href{http://oeis.org/A068551}{A068551}]{OEIS} and the even-indexed terms are double the terms of \cite[\href{http://oeis.org/A008549}{A008549}]{OEIS}. 

\begin{table}[ht!]
{\renewcommand{\arraystretch}{2}$\begin{array}{c||c|c|c|c|c|c|c|c|c|c|c|c|c}
n & 3 & 4 & 5 & 6 & 7 & 8 & 9 & 10 & 11 & 12 & 13 & 14 & 15\\
\hline
\hline
\left\vert \APSPLUS_n\setminus\APS_n \right\vert & 2  & 2 & 10 & 12 & 44 & 58 & 186 & 260 & 772 & 1124 & 3172 & 4760 & 12952
\end{array}$}
\vspace{.1in}
\caption{The number of all-positive pinnacle sets that are admissible in $\Sym_n^B$ but not in $\Sym_n$, for $3 \le n \le 15$.}

\label{table:counting aps+ not in type a}
\end{table}

\section{Future directions}\label{sec:future}

As demonstrated by the results in this paper, admissible pinnacle sets have rich structure and properties even beyond the symmetric group. There are many directions for further research on this topic, including broad questions about pinnacle sets for families of permutations with certain properties, and enumerative specializations. 

As a complement to those large questions, we conclude this work by pointing out that we uncovered a possible connection between $\left|\APSB_n\right|$ and sequence \cite[\href{http://oeis.org/A119258}{A119258}]{OEIS}. In particular, we have the following conjecture.

\begin{conj}\label{C:A119258} Consider the sequence \cite[\href{http://oeis.org/A119258}{A119258}]{OEIS}, given by $T(n,0) = T(n,n) = 1$ and $T(n,k) = 2T(n-1, k-1) + T(n-1,k)$ for $0<k<n.$ Then 
$$\left|\APSB_n\right| = T\left(n,\left\lfloor \frac{n-1}{2} \right\rfloor \right).$$
\end{conj}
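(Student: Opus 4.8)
The plan is to reduce Conjecture~\ref{C:A119258} to a two-variable binomial identity and then prove that identity by induction using the recurrence that defines $T$. Write $m \coloneqq \lfloor (n-1)/2 \rfloor$ and, for nonnegative integers $n$ with $0 \le m \le n$, set
\[
f(n,m) \coloneqq \sum_{k \in \mathbb{Z}} \binom{n}{k}\binom{n-1-k}{m-k},
\]
where we use the standard polynomial convention for binomial coefficients (so $\binom{-1}{0}=1$, and $\binom{r}{s}=0$ whenever $s<0$); with this convention the sum is really finite, running over $0 \le k \le m$, and it agrees with the expression in Theorem~\ref{T:countB}. Thus $\left|\APSB_n\right| = f(n,m)$ for $n \ge 2$, and the conjecture holds trivially for $n \le 1$ since both sides equal $1$. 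Hence it suffices to prove
\[
f(n,m) = T(n,m) \qquad \text{for all integers } 0 \le m \le n.
\]
It is important to prove this full two-variable identity, and not merely the diagonal case $m=\lfloor(n-1)/2\rfloor$, because the recurrence defining $T$ relates entries in different columns.

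The heart of the argument is to check that $f$ obeys exactly the same boundary data and recurrence as $T$. The boundary values are immediate from the definition: in $f(n,0)$ only the $k=0$ term survives, giving $f(n,0)=1$; in $f(n,n)$ every term with $k<n$ vanishes since $\binom{n-1-k}{n-k}=0$, leaving $\binom{n}{n}\binom{-1}{0}=1$. For the recurrence, fix $0<m<n$ and apply Pascal's rule $\binom{n}{k}=\binom{n-1}{k}+\binom{n-1}{k-1}$. Reindexing $k \mapsto k+1$ in the sum coming from $\binom{n-1}{k-1}$ identifies it as $f(n-1,m-1)$. In the remaining sum $\sum_k \binom{n-1}{k}\binom{n-1-k}{m-k}$ one applies Pascal's rule a second time, now to $\binom{n-1-k}{m-k}=\binom{n-2-k}{m-k}+\binom{n-2-k}{m-1-k}$, which splits it as $f(n-1,m)+f(n-1,m-1)$. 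Adding the pieces yields
\[
f(n,m) = f(n-1,m) + 2f(n-1,m-1),
\]
which is precisely the recurrence satisfied by $T$ on the range $0<m<n$.

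With the boundary values and recurrence matched, induction on $n$ (base case $n=0$, where $f(0,0)=1=T(0,0)$) gives $f(n,m)=T(n,m)$ for all $0\le m\le n$; specializing to $m=\lfloor(n-1)/2\rfloor$ and invoking Theorem~\ref{T:countB} proves the conjecture. I do not anticipate a genuine obstacle here: the only non-mechanical steps are the decision to prove the two-variable statement and the double application of Pascal's rule, after which everything is bookkeeping, with the edge cases (terms with $k$ near $0$ or $n$, and the convention $\binom{-1}{0}=1$) needing only a short, careful check. As an alternative derivation, the \emph{snake-oil} method gives the row generating function $\sum_{m\ge 0} f(n,m)\,y^m = (1+2y)^n/(1+y)$, equivalently the closed form $f(n,m)=\sum_{k=0}^{m}(-1)^{m-k}2^k\binom{n}{k}$; from this one can recover the recurrence above, or compare directly with $T$.
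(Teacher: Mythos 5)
The statement you are proving is left as an open conjecture in the paper (Conjecture~\ref{C:A119258} appears in the ``Future directions'' section with no proof), so there is no argument of the authors to compare yours against; what you have written is a proof that resolves the conjecture. Your strategy --- strengthen the diagonal claim to the full two-variable identity $f(n,m)=T(n,m)$ for $0\le m\le n$, where $f(n,m)=\sum_{k}\binom{n}{k}\binom{n-1-k}{m-k}$, and then verify that $f$ satisfies the same boundary conditions and recurrence as $T$ --- is correct, and the strengthening is genuinely necessary, as you note, because the recurrence $T(n,m)=2T(n-1,m-1)+T(n-1,m)$ couples different columns and cannot be run along the single diagonal $m=\lfloor(n-1)/2\rfloor$. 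I checked the details: the boundary values $f(n,0)=f(n,n)=1$ hold with the convention $\binom{-1}{0}=1$; Pascal's rule applied to $\binom{n}{k}$ splits off $f(n-1,m-1)$ after the shift $k\mapsto k+1$; and the second application of Pascal's rule to $\binom{n-1-k}{m-k}$ splits the remaining sum as $f(n-1,m)+f(n-1,m-1)$, giving $f(n,m)=f(n-1,m)+2f(n-1,m-1)$ exactly as required (the edge terms at $k=0$, $k=m$, and $m=n-1$ all behave correctly under the polynomial convention). The induction on $n$ then closes the argument, and the specialization $m=\lfloor(n-1)/2\rfloor$ together with Theorem~\ref{T:countB} yields the conjecture for $n\ge 2$, with $n\le 1$ trivial. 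Your generating-function aside, $\sum_{m}f(n,m)y^m=(1+2y)^n/(1+y)$ and the alternating closed form $f(n,m)=\sum_{k=0}^{m}(-1)^{m-k}2^k\binom{n}{k}$, is also correct and gives an independent route to the same recurrence. The only presentational suggestion is to state explicitly that Pascal's rule $\binom{r}{s}=\binom{r-1}{s}+\binom{r-1}{s-1}$ holds without exception under the conventions you adopt, since the argument silently relies on this at $s=0$.
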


\appendix
\section{Data}

Patrek Ragnarsson's code for computing the data in Tables~\ref{table:counting aps in type b}, \ref{table:counting aps in type d}, and~\ref{table:counting aps+ not in type a} can be found at \url{https://github.com/PatrekR/Signed-pinnacle-sets}. Note that the data in Table~\ref{table:counting aps in type d} is the difference between the enumerations given in two of the files posted at this GitHub link.

\nocite{*}
\bibliographystyle{plain}
\bibliography{Bibliography.bib}

\end{document}